\documentclass{amsart}

\usepackage{amsmath,amsxtra,amssymb,amsfonts,latexsym, mathrsfs, dsfont,bm,enumitem}
\usepackage{bbm}
\usepackage[usenames]{color}
\usepackage[hyperindex]{hyperref}
\usepackage[initials]{amsrefs}
\usepackage{tikz}
\usepackage{comment}
\usepackage{xcolor}
\usepackage{graphics,epsf} 
\usepackage{pgfplots}
\usepackage{graphicx}
\usepackage{wrapfig}        
\usepackage{caption}

\newtheorem{theorem}{Theorem}[section]
\newtheorem{lemma}[theorem]{Lemma}
\newtheorem{proposition}[theorem]{Proposition}

\newtheorem*{vdcthm*}{Theorem}

\makeatletter
\def\namedlabel#1#2{\begingroup
	#2%
	\def\@currentlabel{#2}%
	\phantomsection\label{#1}\endgroup
}
\makeatother


\renewcommand{\q}{\quad}



\newcommand{\cn}{\mathcal N}

\newcommand{\bn}{\mathbf{n}}


\DeclareMathOperator\supp{supp}

\newcommand{\N}{\mathbb{N}}

\newcommand{\Q}{\mathbb{Q}}
\newcommand{\R}{\mathbb{R}}
\newcommand{\C}{\mathbb{C}}

\newcommand{\lf}{\lfloor}
\newcommand{\rf}{\rfloor}
\newcommand{\varep}{\varepsilon}

\newcommand{\aln}[1]{\begin{align}#1\end{align}}
\newcommand{\alnn}[1]{\begin{align*}#1\end{align*}}
\newcommand{\eqn}[1]{\begin{equation}#1\end{equation}}
\newcommand{\eqnn}[1]{\begin{equation*}#1\end{equation*}}
 
\newcommand{\bee}{\mathbf{e}}
\newcommand{\bt}{\mathbf{t}}
\newcommand{\bz}{\mathbf{0}}
\newcommand{\bone}{\mathbf{1}}
\newcommand{\btwo}{\mathbf{2}}

\newcommand{\lddl}{\lambda\frac{d}{d\lambda}}


\begin{document}
	\title[Oscillatory Integrals via Real Analysis]{Some Oscillatory Integral Estimates Via Real Analysis}
	
	\subjclass[2010]{Primary 42B20}
	\keywords{Newton polyhedron, Newton polygon, Newton polytope, oscillatory integral, van der Corput, Varchenko, asymptotic expansion}
	\author{Maxim Gilula}
	\address{Department of Mathematics, Michigan State University, East Lansing, MI, 48824, USA}
	\email{gilulama@math.msu.edu }
	
	\begin{abstract}
		We study oscillatory integrals in several variables with analytic, smooth, or $C^k$ phases satisfying a nondegeneracy condition attributed to Varchenko. With only real analytic methods, Varchenko's estimates are rediscovered and generalized. The same methods are pushed further to obtain full asymptotic expansions of such integrals with analytic and smooth phases, and finite expansions with error assuming the phase is only $C^k$. The Newton polyhedron appears naturally in the estimates; in particular, we show precisely how the exponents appearing in the asymptotic expansions depend only on the geometry of the Newton polyhedron of the phase. All estimates proven hold for oscillatory parameter real and nonzero, not just asymptotically.
	\end{abstract}
	\maketitle
	
	\section{Introduction}
	The purpose of this paper is to develop a real analytic method of studying oscillatory integrals of the form
	\eqn{I(\lambda)=\int_{\R^d}e^{i\lambda\phi(x)}\psi(x)dx,\label{oscint}}
	where $\lambda$ is a real parameter, $\phi:\R^d\to\R$ is real analytic, smooth, or $C^k$, satisfying a certain nondegeneracy condition, and $\psi$ is a smooth or $C^k$ cutoff supported close enough to the origin. While developing this method, we reprove and improve some classical results, and obtain some new estimates. For example, an expansion of $I(\lambda)$ for certain nondegenerate $C^k$ phases is developed for all $\lambda\neq 0.$
		
	Van der Corput's Lemma completely characterizes the behavior of $I(\lambda)$ when $\phi:(a,b)\to\R$ is $C^k$ (or, if we have a cutoff supported on $(a,b)$):
	
	\begin{vdcthm*}[van der Corput\cite{vandercorput35}] 
		Let $\phi:(a,b)\to\R$ be $C^k$ and assume $|\phi^{(k)}(x)|\ge 1$ for all $x\in (a,b).$ Then
		\eqnn{\Bigg|\int_a^b e^{i\lambda\phi(x) dx}\Bigg|\le c_k |\lambda|^{-1/k}}
		holds for all $\lambda\neq 0$ for $k\ge 2$. It also holds for $k=1,$ assuming $\phi'$ is monotone. 
	\end{vdcthm*}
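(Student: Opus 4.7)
The plan is to proceed by induction on $k$, with the base case $k=1$ handled by a direct integration by parts and the inductive step by a splitting argument around the (at most one) zero of $\phi^{(k-1)}$.

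For $k=1$, the monotonicity of $\phi'$ together with $|\phi'| \geq 1$ guarantees that $1/\phi'$ is monotone with $|1/\phi'| \leq 1$. Writing
$$\int_a^b e^{i\lambda\phi(x)}\,dx = \frac{1}{i\lambda}\int_a^b \frac{1}{\phi'(x)}\,\frac{d}{dx}\!\left(e^{i\lambda\phi(x)}\right)dx$$
and integrating by parts, the boundary terms are each bounded by $1/|\lambda|$, and the remaining integral is bounded by $1/|\lambda|$ times the total variation of $1/\phi'$, which by monotonicity equals $|1/\phi'(b) - 1/\phi'(a)| \leq 2$. This yields a bound of the form $c_1/|\lambda|$.

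For the inductive step, assume the estimate holds for $k-1 \geq 1$ with some constant $c_{k-1}$. Since $|\phi^{(k)}| \geq 1$ and $\phi^{(k)}$ is continuous, $\phi^{(k)}$ has constant sign on $(a,b)$, so $\phi^{(k-1)}$ is strictly monotone and has at most one zero $c$. Given $\delta > 0$, set $J_\delta = (c-\delta,c+\delta) \cap (a,b)$ (or $J_\delta = \emptyset$ if no zero exists). The integral over $J_\delta$ is trivially bounded by $2\delta$. On each of the (at most two) subintervals comprising $(a,b)\setminus J_\delta$, the mean value theorem forces $|\phi^{(k-1)}| \geq \delta$; hence $\tilde\phi := \phi/\delta$ satisfies $|\tilde\phi^{(k-1)}| \geq 1$, and rewriting $e^{i\lambda\phi} = e^{i(\lambda\delta)\tilde\phi}$ lets the inductive hypothesis yield a contribution of at most $2c_{k-1}(|\lambda|\delta)^{-1/(k-1)}$. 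Choosing $\delta = |\lambda|^{-1/k}$ balances the two contributions and delivers the desired bound $c_k|\lambda|^{-1/k}$ with $c_k = 2 + 2c_{k-1}$.

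The main obstacle is the base case, where the monotonicity of $\phi'$ is essential: without it, $1/\phi'$ could have unbounded total variation and the integration-by-parts estimate would collapse. The inductive step is conceptually routine once one recognizes that a potential singularity of $1/\phi^{(k-1)}$ must be isolated in a short interval whose length is optimized against the gain from the lower-order estimate. A minor technical point is that when applying the $k=1$ case within the $k=2$ step, one must verify that $\tilde\phi' = \phi'/\delta$ is monotone, which is immediate from $|\phi''| \geq 1$; for $k\geq 3$ no such check is required.
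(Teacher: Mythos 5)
The paper does not actually prove this statement: it is quoted as a classical theorem with a citation to van der Corput, and the introduction only sketches the three-step strategy (lower bound away from the degeneracy, integration by parts, optimization) with a pointer to Stein. Your argument is exactly that standard textbook proof, and the bookkeeping is right: the rescaling $\tilde\phi=\phi/\delta$ converts the inductive hypothesis into a bound $2c_{k-1}(|\lambda|\delta)^{-1/(k-1)}$, and $\delta=|\lambda|^{-1/k}$ balances it against the trivial bound $2\delta$, giving $c_k=2c_{k-1}+2$. The check that $\tilde\phi'$ is monotone when the $k=1$ case is invoked is also correctly placed.

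There is one step that fails as written. In the inductive step, when $\phi^{(k-1)}$ has no zero in $(a,b)$ you set $J_\delta=\emptyset$ and assert that the mean value theorem forces $|\phi^{(k-1)}|\ge\delta$ on all of $(a,b)$. Without a zero to anchor the mean value theorem this is false: $\phi^{(k-1)}(x)=x+\epsilon$ on $(0,1)$ with $\epsilon$ tiny is consistent with $|\phi^{(k)}|\ge 1$, never vanishes, and yet is smaller than $\delta$ near $x=0$. The standard repair is to note that the strictly monotone, nonvanishing $\phi^{(k-1)}$ has constant sign and attains the infimum of its absolute value at one endpoint $c\in\{a,b\}$; excising $(c-\delta,c+\delta)\cap(a,b)$ and using $|\phi^{(k-1)}(x)|\ge|\phi^{(k-1)}(x)-\phi^{(k-1)}(c)|$ (same sign) together with $|\phi^{(k)}|\ge1$ restores $|\phi^{(k-1)}|\ge\delta$ on the remainder, with the same final count. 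A second, much more minor point: for a merely $C^1$ phase in the base case, $1/\phi'$ need not be differentiable, so the integration by parts should be taken in the Riemann--Stieltjes sense (or via the second mean value theorem); your total-variation bound $|1/\phi'(b)-1/\phi'(a)|\le 2$ is then exactly the right quantity.
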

	Van der Corput's lemma has been a key tool for finding decay rates of solutions to  differential equations (e.g., Bessel functions). The key steps of the proof (for example, in Stein\cite{s93}) are as follows: a lower bound is obtained away from the singularity of the phase, the method of stationary phase is applied, then optimization is used to obtain the best bound in $\lambda$. Van der Corput-type lemmas are crucial for understanding measures supported on surfaces, along with many other important and current research questions, but oscillatory integrals are not well understood in high dimensions. Many authors have given their take to the vital question \lq\lq What is van der Corput's lemma in higher dimensions?\rq\rq  Just a handful of significant papers attempting to answer this include Carbery-Christ-Wright\cite{ccw99}, Phong-Stein-Sturm\cite{pss01}, Carbery-Wright\cite{cw02} (a paper titled precisely this question), and too many more to name. Because of the difficulties posed by singularities in higher dimensions, this question has not yet been fully answered. In most answers to this question, sharpness is traded for uniformity (multiple of the beautiful results in the above mentioned papers), but in some cases uniformity is traded for sharpness, e.g., Varchenko\cite{varchenko76}: under a nondegeneracy condition on the real analytic phase $\phi$, he showed that for smooth $\psi$ supported close enough to the origin, 
	\eqnn{\Bigg{|}\int_{\R^d} e^{i\lambda\phi(x)}\psi(x)dx\Bigg{|}= O\Big(\lambda^{-1/t}\log^{d-1-k}(\lambda)\Big)}
	as $\lambda\to\infty,$ where $t$ and $k$ can be read directly from the Newton polyhedron of $\phi.$ Moreover, this bound is sharp in both exponents if $t>1$ and $\psi(\bz)\neq 0.$
	
	Many papers obtaining sharp estimates, such as Varchenko's, and more recently Kamimoto-Nose\cite{kn16}, borrow algebraic techniques mainly because of the difficulty caused by the singularities of the phase. Examples of such techniques involve adapted coordinates resolution of singularities, toric varieties, and finding poles of Zeta functions adapted to these problems. However, the original proof is purely analytic, and it seems unlikely that these algebraic methods are the right setting for answering questions of uniformity. It should be noted that these methods do emphasize on the main difficulties of estimating such integrals: singularities of analytic functions are not well understood, and require powerful tools.
	
	The main ideas in this paper are organized as follows: a lower bound away from the singularities of the phase is obtained, integration by parts away from the singularities is used to obtain a quantitative stationary phase result, then optimization is applied to obtain the optimal bound for reproving Varchenko's upper bound. Since these ideas so closely resemble the original proof of van der Corput, they seem to provide a natural setting for answering what van der Corput is in higher dimensions. Dyadic decomposition is the bread and butter of many proofs in harmonic analysis and is natural in this setting because understanding $I(\lambda)$ requires a delicate analysis of cancellations near the singularities of $\phi$ (see Rychkov\cite{rychkov01} for just one great example of such an argument for smooth functions in two variables). Such decompositions into more general shapes has been a very successful technique for finding multilinear estimates, e.g., in Phong-Stein\cite{ps94} and Phong-Stein-Sturm\cite{pss01}. The asymptotic expansion for nondegenerate real analytic and smooth phases has also been recently studied by Cho-Kamimoto-Nose\cite{ckn13} with mainly algebraic methods. We provide evidence for the power of our methods via a new proof of the sharp bound originally proven in Varchenko's seminal work for nondegenerate analytic phases\cite{varchenko76}. We also prove Varchenko's upper bound for oscillatory integrals with smooth and $C^k$ phases under a similar nondegeneracy condition. Then, a full asymptotic expansion for nondegenerate smooth and analytic phases is developed, as well as an asymptotic expansion with finitely many terms for nondegenerate $C^k$ phases. The proof here also mirrors the standard proofs of the asymptotic expansion for one-dimensional phases: solving a differential inequality involving $I(\lambda)$.  One more feature of van der Corput's lemma is that the estimates hold for all $\lambda\neq 0,$ but Varchenko only has an estimate asymptotic in $\lambda.$ Gressman\cite{pgs16} and others have noted that bounds for all $\lambda$ are important for obtaining stability results. Our results hold for all $\lambda\neq 0,$ including the asymptotic expansion with finitely many terms (though we still refer to it as an asymptotic expansion). The main purpose of developing these tools is to apply them to the study of stability questions.
	
	In higher dimensions, the Newton polyhedron has proven itself to be a key tool for describing the behavior of oscillatory integrals (e.g., \cite{ckn13, MR3117305, greenblatt10, greenblatt12, MR2297031, pgs16, kn16, kn16toric, ps94, pss01, varchenko76}, and many more). It is the best combinatorial tool known for characterizing decay of real analytic functions. Naturally, we use it below to keep track of which monomials in a Taylor expansion are the largest. It can be argued that the Newton polyhedron was also used in one dimension, as the polyhedron reduces to a ray on the real line, and its boundary answers which monomial contributes most near the critical point. Since nondegeneracy is automatically satisfied in one variable, all results also apply to $C^k$ functions of one variable, generalizing van der Corput in particular.

	To demonstrate how the Newton polyhedron is used as a geometric tool, let us  briefly discuss the asymptotic expansion of $I(\lambda).$ To prove the asymptotic expansion, we consider the decay of $(\lambda \tfrac{d}{d\lambda} +p)I(\lambda)$. After integration by parts, we are able to show that for certain $p$ depending on the amplitude and phase, the decay of this new integral is shown to be strictly better than that of $I(\lambda)$. After proving a sequence of ODE inequalities involving such operators, we are able to show that 
		\begin{wrapfigure}{L}{5cm}
			\centering
			\captionsetup{justification=centering}
			\begin{tikzpicture}
			\draw[fill=lightgray!30,lightgray!30] (0,4)--(0,3)--(2,0)--(3,0)--(3,4);
			\draw[<->] (0,4)--(0,0)--(3,0);
			\draw[thick, red] (0,3) -- (2,0);
			\foreach \x in {1,...,3}{
				\foreach \y in {1,...,4}{
					\pgfmathsetmacro{\q}{\x/2+\y/3}
					\draw[fill] (\x,\y) circle [radius=.03];
					\draw[blue,->] (0,0) -- (\x,\y);
					\draw[blue, fill] (\x/\q,\y/\q) circle [radius=.05];
				}
			}
			\end{tikzpicture}
			\caption{Newton polyhedron and scalings of Thm \ref{thm2}.}
		\end{wrapfigure}
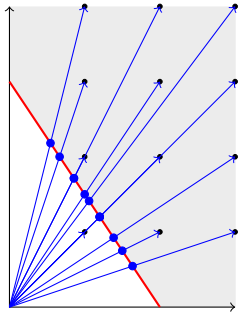
	\eqn{\Bigg{|}I(\lambda)- \sum_{j=0}^{n-1}\sum_{r=0}^{d_j-1}a_{j,r}(\psi)|\lambda|^{-p_j}\log^{d_j-1-r}(|\lambda|+2)\Bigg{|}\le C|\lambda|^{-p_{n}}\log^{d_n-1}(|\lambda|+2),\label{expansionI}}
	 where $C$ is some constant independent of $\lambda$ but depending on $\phi$, $\psi,$ and their higher derivatives. For nondegenerate analytic and smooth phases, $n$ can be any natural number. Nondegenerate $C^k$ phases have a bound on $n$ depending on $k$ as well as the geometry of the Newton polyhedron of $\phi$. The precise statement of the above is Theorem \ref{thm2}. The expansion is completely characterized by exponents $p_j$ and $d_j$ read directly off from the boundary of the Newton polyhedron by considering scalings of positive integer lattice points (see Figure 1). For example, we can scale the \lq\lq smallest\rq\rq positive integer lattice point $\bone=(1,\dotsc,1)$ by $1/t$, where $\bt:=t\bone$ lies on the boundary of the Newton polyhedron. It is no coincidence that $-1/t$ is the largest exponent in \eqref{expansionI}, and that $d_0=\min\{d,n\}$ where $n$ is the number of codimension $1$ faces containing $\bt$.
	 
	 In order to prove the lower bound away from singularities (Lemma \ref{lem1}), we delicately use nondegeneracy: there is a tug of war between the slowest decaying monomials appearing in the Taylor expansion of $\phi$ and the monomials that are nearly slowest decaying. The crucial step is making sure there is not too much cancellation between monomials in some sense, which would force the gradient to be too small. We show that the nondegeneracy condition guarantees this does not happen, and we obtain a result similar to that of Lojasiewicz for analytic functions\cite{lojasiewicz59}, except ours is sharpest possible. In fact, we show that the nondegeneracy considered by Varhcenko is equivalent to the sharpest possible lower bound on the growth of the gradient. This is the place others might use resolution of singularities, but we avoid it with elementary analytic methods: triangle inequality and linear algebra are all we need, after developing a good sense of the cancellations of $\nabla\phi$ near its singularities. We use this pointwise estimate to obtain bounds over dyadic boxes. Then $I(\lambda)$ is dyadically decomposed via partition of unity. Since $\phi$ has no singularities in any such box, Lemma \ref{thm1} provides a sharp estimate by relying on the previous lemma (sharp in the sense that the estimates are later used to reprove Varchenko's sharp upper bound on $I(\lambda)$).  We avoid difficult integration by parts by choosing an operator that is most adapted to our integration by parts. To obtain Theorem \ref{wutcor}, from which Varchenko's upper bound follows easily as a special case, we optimize over size and decay estimates guaranteed by stationary phase. Afterwards, we sum over all boxes to obtain the final estimate. So all previous algebraic proofs are reduced in complexity to analysis and linear algebra, in addition to the proof being shorter than others in current literature. The challenge of obtaining sharp exponents usually requires delicate and technical arguments, and this case is no exception. The lower bound, the decomposition, and the optimization are handled carefully so that the cancellation of the integral can still be exploited to obtain sharp exponents, and the most technical arguments are explained with geometric intuition along the way.

	\subsection{Conventions and terminology}
	If $x$ is a $d-$tuple we write $x=(x_1,\dotsc, x_d)$ so that subscripts denote components of a vector. On the other hand, whenever we have a list of $d-$tuples, they are indexed by a superscript, e.g., $\{\alpha^i\}_{1\le i\le d}$. There is one consistent exception: the standard unit normals $\bee_i\in\R^d$ defined componentwise by the Kronecker delta $\bee_{ij}=\delta_{ij}$. We write $\|\cdot \|_p:\R^d\to\R$ for the $\ell^p$ norm on $\R^d$ for $1\le p\le \infty,$ and simply $\|\cdot\|$ for the norm $\|\cdot\|_\infty.$
	
	Let $\R_\ge=[0,\infty).$ In addition to the standard notation for $y\in\R_\ge^d$ and $\alpha\in\R$ that $\partial^\alpha=\frac{\partial^{\alpha_1}}{\partial x_1^{\alpha_1}}\cdots \frac{\partial^{\alpha_d}}{\partial x_d^{\alpha_d}},$ the exponentiation of vectors $y^\alpha=y_1^{\alpha_1}\cdots y_d^{\alpha_d},$ as well as $|y|=y_1+\cdots +y_d$, we make use of some less standard notation for $c\in\R$ and $y, z \in\R_\ge^d.$ The following conventions condense many computations throughout:
	
	\begin{itemize}
		\item $yz = (y_1 z_1,\dotsc, y_d z_d);$
		\item boldface $\mathbf{c}$ denotes the vector $(c,\dotsc, c)$;	
		\item if $c>0,$ denote the vector $(c^{y_1},\dotsc, c^{y_d})$ by $c^y$;
		\item if $c>1$ then $[y, cy]$ is defined to be the box $\prod_{j=1}^d [y_j, cy_j]$.

	\end{itemize}
	For positive real-valued functions $f$ and $g$, we use the notation
	\eqnn{f(x)\lesssim g(x)}
	to express that there is a positive constant $C$ independent of $x$ such that $f(x)\le Cg(x)$ for all $x$ in the common domain of $f$ and $g$. Finally, we define
	\eqnn{I(\lambda)\sim \sum_{j=0}^N a_jE_j(\lambda)}
	to mean that $|I(\lambda)-\sum_{j=0}^n a_jE_j(\lambda)|\lesssim E_{n+1}(\lambda)$ for all $n<N,$ where the implicit constant is independent of $\lambda$ in the stated domain, but may depend on all $a_j$.

	
	
	\section{Main results}
	Let $\phi$ be an analytic function defined in a neighborhood of the origin. Then $\phi$ can be expressed as a uniformly and absolutely convergent series $\phi(x)=\sum_{\alpha}c_\alpha x^\alpha$ in some possibly smaller neighborhood of the origin.  It is assumed throughout that the phase $\phi$ satisfies $\phi(\bz)=0$ and $\nabla\phi(\bz)=\bz$. Define the \textbf{Taylor support} of $\phi$ by $\supp(\phi)=\{\alpha\in\N^d: c_\alpha\neq 0\},$ where we use the convention that $\N$ is the set of nonnegative integers. The \textbf{Newton polyhedron} of $\phi$, denoted $\cn(\phi),$ is defined to be the convex hull of the union
		\eqnn{\bigcup_{\alpha\in\supp(\phi)} \alpha+\R_\ge^d.}
	Given a compact face $F$ of $\cn(\phi),$ define the polynomial
		\eqnn{\phi_F(x)=\sum_{\alpha\in F}c_\alpha x^\alpha.}
	Define $\phi$ to be \textbf{analytic nondegenerate} if $\phi$ is analytic and for all compact faces $F$ of $\cn(\phi)$, $x^\bone\neq 0$ implies $\|x\nabla\phi_F(x)\|\neq 0.$ In other words, for all compact faces $F$ and for all $x$ not contained in any coordinate hyperplane, there is some $1\le j\le d$ such that $x_j\partial_j\phi_F(x)\neq 0.$
	
	Since polyhedra have finitely many extreme points, the Newton polyhedron only requires a finite subset of $\supp(\phi)$ in order to be defined. We use this to motivate the following definitions for $C^k$ functions. If $P$ is a polynomial such that $\cn(P)$ intersects each coordinate axis, we call $P$ \textbf{convenient}. Assume now that $\phi$ is $C^k$ for some $k\ge 1$ in a neighborhood of the origin. Let $P_k$ be the Taylor polynomial of $\phi$ of order at most $k.$ If $P_k$ is convenient, we define $\cn(\phi)=\cn(P_k)$. Finally, we say $\phi=P_k+R_k$ is \boldsymbol{$k-$}\textbf{nondegenerate} if $P_k$ is convenient analytic nondegenerate. Note that if $\phi\in C^m([-1,1])$ is $k-$nondegenerate for some $k\le m$ then $\phi$ is also $m-$nondegenerate.

	In this paper, we reserve the letter $t$ for the \textbf{Newton distance} of the phase under consideration: given $\phi$ such that $\cn(\phi)$ can be defined as above, define the positive real number $t=\inf\{s:\mathbf{s}\in \cn(\phi)\}$ to be the Newton distance of $\phi.$
		
	A crucial step in proving the main theorems is quantifying how $\nabla\phi$ behaves near the origin.
	
	\begin{lemma}
		Assume $\phi$ is analytic nondegenerate or $k-$nondegenerate. For all $\varep\in (0,1)^d$ small enough $($i.e., $\|\varep\|$ small enough$)$, for all $x$ in the box $[\varep, 4\varep],$ and for all $\alpha\in \cn(\phi),$ we have the lower bound 
		\eqnn{\|x\nabla\phi(x)\|\gtrsim \varep^\alpha,}
		where the implicit constant is independent of $\varep.$ \label{lem1}
	\end{lemma}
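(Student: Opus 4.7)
My plan is to reduce the bound to a uniform inequality for a rescaled family of polynomials and then establish it by a compactness argument driven by the nondegeneracy hypothesis. Since $\varep\in(0,1)^d$ and $\alpha\in\cn(\phi)$, letting $v_0=v_0(\varep)$ denote a vertex of $\cn(\phi)$ minimizing $v\cdot\log(1/\varep)$ over the vertex set, the minimum of the linear functional $v\cdot\log(1/\varep)$ over the polyhedron is attained at $v_0$, so $\alpha\cdot\log(1/\varep)\ge v_0\cdot\log(1/\varep)$ and hence $\varep^\alpha\le\varep^{v_0}$. It therefore suffices to prove $\|x\nabla\phi(x)\|\gtrsim \varep^{v_0}$ on $[\varep,4\varep]$, with an implicit constant depending only on $\phi$.

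Passing to the new variable $y=x/\varep\in[1,4]^d$ and using the identity $x_j\partial_{x_j}\phi(x)=y_j\partial_{y_j}\phi(\varep y)$, I would factor out $\varep^{v_0}$ by setting
\eqnn{\phi_\varep(y) = \varep^{-v_0}\phi(\varep y) = \sum_\alpha c_\alpha\, r_\alpha(\varep)\, y^\alpha,\qquad r_\alpha(\varep)=\varep^{\alpha-v_0}\in[0,1].}
The claim reduces to the uniform lower bound $\|y\nabla\phi_\varep(y)\|\gtrsim 1$ on $y\in[1,4]^d$ for $\|\varep\|$ small. I would prove this by contradiction: if it failed there would exist $\varep^{(n)}\to\bz$ and $y^{(n)}\in[1,4]^d$ with $\|y^{(n)}\nabla\phi_{\varep^{(n)}}(y^{(n)})\|\to 0$, and by passing to subsequences (using compactness of $[1,4]^d$, finiteness of the vertex set, and Tychonoff on $[0,1]^{\supp(\phi)}$) I may assume $y^{(n)}\to y^*$, $v_0(\varep^{(n)})$ is a fixed vertex, and $r_\alpha^{(n)}\to r_\alpha^*\in[0,1]$ for every $\alpha$. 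Uniform convergence of $\phi_{\varep^{(n)}}$ and its gradient on $[1,4]^d$ (automatic when $\phi$ is $k$-nondegenerate since the sum is finite, and obtained from Cauchy estimates when $\phi$ is analytic) yields a limit $\phi^*$ with $\|y^*\nabla\phi^*(y^*)\|=0$.

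The main obstacle is to rule out such a degenerate limit. The structural claim I would prove is that the surviving support $\{\alpha:r_\alpha^*>0\}$ is contained in a single compact face $F$ of $\cn(\phi)$ and that $\log r_\alpha^*$ is the restriction of an affine function of $\alpha$ to that support, so that $\phi^*(y)=\lambda\,\phi_F(y\,e^{-u})$ for some $\lambda>0$ and $u\in\R^d$. To establish this I would iteratively decompose the weight vectors $w^{(n)}=\log(1/\varep^{(n)})$ along their dominant asymptotic directions: writing $w^{(n)}=\|w^{(n)}\|\,\hat w^{(\infty)}+u^{(n)}$, only $\alpha$ with $(\alpha-v_0)\cdot\hat w^{(\infty)}=0$ (i.e.\ lying in the minimizing face of $\hat w^{(\infty)}$) can survive, and if $u^{(n)}$ is bounded its limit directly gives the affine form; otherwise one iterates the same decomposition on the unbounded perpendicular component, a process terminating in at most $d$ steps because $\cn(\phi)$ has only finitely many faces. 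With the structural claim in hand, the change of variables $z=y\,e^{-u}$ yields $y\nabla\phi^*(y)=\lambda\,z\nabla\phi_F(z)$, so analytic (or $k$-)nondegeneracy on the compact face $F$, together with compactness of the image of $[1,4]^d$ in the strictly positive orthant, forces $\|y^*\nabla\phi^*(y^*)\|>0$, contradicting $\|y^*\nabla\phi^*(y^*)\|=0$. This iterative weight-decomposition, encoding the tug of war between the slowest and nearly slowest decaying monomials, is the single delicate step of the argument.
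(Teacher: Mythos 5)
Your proposal takes a genuinely different route from the paper. The paper's proof is quantitative and constructive: it builds an explicit hierarchy of constants $a>C_0>C_1>\cdots>C_d$ and box sizes $b_m$, proves a ``Main Proposition'' identifying, for each dyadic $\varep$, a compact face $F'$ on which the monomials rescale into a fixed compact box $[b_{m'},b_{m'}^{-1}]^d$ while every off-face monomial is at most $\varep^\beta C_{m'}/a$, and concludes by the triangle inequality, with the admissible threshold $\|\varep\|<s$ written down explicitly in (\ref{smallenough}). You replace all of this by a compactness/contradiction scheme: rescale, extract subsequential limits of the weights $r_\alpha(\varep)$, and show the limit is a positive dilate of a face polynomial $\phi_F$, to which nondegeneracy applies. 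The geometric skeleton is the same --- both arguments iterate over faces of $\cn(\phi)$ at most $d$ times to isolate the surviving monomials --- but yours trades explicit constants for lighter bookkeeping. That trade is legitimate for the lemma as stated (no later result needs the explicit value of $s$), though it loses the effectivity the paper advertises.

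Two points in your sketch are genuine gaps rather than routine details. First, the identity $\varep^{-v_0}\phi(\varep y)=\sum_\alpha c_\alpha r_\alpha(\varep)y^\alpha$ is false when $\phi$ is merely $C^k$: you must write $\phi=P_k+R_k$ and separately show $\varep^{-v_0}\,y\nabla_y\bigl(R_k(\varep y)\bigr)\to 0$ uniformly on $[1,4]^d$. This holds, but it uses two facts you never invoke: convenience of $P_k$ forces every $\alpha$ with $|\alpha|=k$ to lie in $\cn(\phi)$, so $\varep^{\alpha-v_0}\le 1$, and the Taylor coefficients satisfy $h_\alpha(\varep y)\to 0$ --- exactly the role of (\ref{Rbd}) in the paper. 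Your parenthetical ``automatic since the sum is finite'' covers only $P_k$. (In the analytic case the appeal to Cauchy estimates also needs a preliminary rescaling $x\mapsto\delta x$ so that $\sum_\alpha|c_\alpha|4^{|\alpha|}$ converges; minor.) Second, the iterative weight decomposition as written need not terminate: if the limiting direction of the residual is orthogonal to the affine hull of the current face, the face does not shrink and the residual is unchanged on it, so ``finitely many faces'' is not by itself the reason the process stops. The fix is to project each residual onto the linear span of the current face translated by $-v_0$ before normalizing; then every nonzero limiting direction cuts the face properly and the iteration ends within $d$ steps. You must also verify that the terminal face is compact, i.e.\ that no support point off the compact faces survives in the limit; this follows because such a point satisfies $u=v_u+\gamma_u$ with $\gamma_u\in\R_\ge^d\setminus\{\bz\}$ and $\gamma_u\cdot\log(1/\varep^{(n)})\to\infty$, which is precisely what the paper's constant $p$ in (\ref{pp}) controls. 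With these repairs your argument goes through.
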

	\noindent So nondegeneracy implies the sharpest possible growth rate for $\nabla\phi$ around the origin, where \lq\lq small enough\rq\rq is made explicit in (\ref{smallenough}), near the end of the proof of Lemma \ref{lem1}. On the other hand, one can easily see that satisfaction of such a lower bound implies nondegeneracy for analytic functions. Moreover, it implies nondegeneracy for $C^k$ functions under additional assumptions on the remainder term, such as $k-$nondegeneracy. With stationary phase, we prove
	
	\begin{lemma} Let $\beta\in\N^d.$ Let $\phi$ be analytic nondegenerate or $k-$nondegenerate. Assume $\eta:\R^d\to\R$ is $C^k$ with support in $[1,4]^d$. For all $\varep\in(0,1)^d$ small enough, we have the estimate
		\eqnn{\Bigg|\int_{\R^d}e^{i\lambda\phi(x)}x^\beta\eta(\varep_1^{-1}x_1,\dots,\varep_d^{-1}x_d)dx\Bigg|\lesssim \lambda^{-N} \varep^{-(N\alpha-\beta-\bone)}}
		for all $\lambda > 2$, all $0\le N \le k-1$, and all $\alpha \in \cn(\phi),$ where the implicit constant above is independent of $\varep$ and $\lambda.$ If in addition $\phi$ and $\eta$ are smooth then the estimate holds for all $0\le N<\infty.$
		\label{thm1}		
	\end{lemma}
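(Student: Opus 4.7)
The plan is to integrate by parts $N$ times using an operator adapted to the quantity $\|x\nabla\phi\|_2$ that Lemma \ref{lem1} controls from below. First I would set
$$Lf := \frac{1}{i\lambda\|x\nabla\phi\|_2^2}\sum_{j=1}^d(x_j\partial_j\phi)(x_j\partial_j f),$$
so that $Le^{i\lambda\phi}=e^{i\lambda\phi}$. The scale-free derivatives $x_j\partial_j$ paired with the factors $x_j\partial_j\phi$ are the key design choice: they match the quantity bounded in Lemma \ref{lem1}, and because $\|\cdot\|_2\ge\|\cdot\|_\infty$, that lemma upgrades to $\|x\nabla\phi\|_2\gtrsim \varep^\alpha$ on $[\varep,4\varep]$ for every $\alpha\in\cn(\phi)$, so $L$ is well-defined on the support of $\eta(\varep^{-1}\cdot)$ for $\varep$ small enough. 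Integrating by parts $N$ times yields
$$I=\int_{\R^d}e^{i\lambda\phi(x)}(L^t)^N\bigl(x^\beta\eta(\varep^{-1}x)\bigr)\,dx,\qquad L^tg=-\frac{1}{i\lambda}\sum_j(1+x_j\partial_j)[a_jg],$$
where $a_j:=(x_j\partial_j\phi)/\|x\nabla\phi\|_2^2$.

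Next I would prove by induction on $N$ the pointwise bound
$$\bigl|(L^t)^N(x^\beta\eta(\varep^{-1}x))(x)\bigr|\lesssim \lambda^{-N}\varep^{-N\alpha+\beta}\quad\text{on }[\varep,4\varep],$$
after which the claim follows by integrating over the support, a box of volume $3^d\varep^\bone$. Expanding $(L^t)^N$ by Leibniz gives a finite sum of terms, each $\lambda^{-N}$ times a product of factors $(x\partial)^{\nu_i}a_{j_i}$ times an $(x\partial)^\mu$-derivative of the amplitude $x^\beta\eta(\varep^{-1}x)$. Because $x_j\partial_j$ is scale-invariant and $\eta\in C^k$, one has $|(x\partial)^\mu(x^\beta\eta(\varep^{-1}x))|\lesssim \varep^\beta$ on the box, uniformly in $\varep$.

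The main obstacle is to establish the pointwise bound $|(x\partial)^\mu a_j(x)|\lesssim \varep^{-\alpha}$ on $[\varep,4\varep]$ for all $|\mu|\le N$. The base case $|\mu|=0$ is immediate from $|x_j\partial_j\phi|\le\|x\nabla\phi\|_2$ and Lemma \ref{lem1}. For $|\mu|\ge 1$, the quotient rule expands $(x\partial)^\mu a_j$ into a finite sum of terms of the form $P/\|x\nabla\phi\|_2^{2m}$ with $m\le|\mu|+1$, where $P$ is a polynomial of total degree $2m-1$ in the quantities $(x\partial)^{\nu_i}(x_k\partial_k\phi)$. The key structural observation is that these differentiated factors inherit the dilation-homogeneity of $x_k\partial_k\phi$ itself: on the small box (with the buffer $4^{|\gamma|}$ from absolute convergence of the Taylor series), each is bounded pointwise by a constant times $\|x\nabla\phi\|_2$. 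Numerator and denominator factors thus pair off to leave a single unpaired $\|x\nabla\phi\|_2^{-1}\lesssim \varep^{-\alpha}$. In the $k$-nondegenerate $C^k$ setting the same expansion uses only $N+1\le k$ classical derivatives of $\phi$, which explains the restriction $0\le N\le k-1$; the analytic and smooth cases place no restriction on $N$.
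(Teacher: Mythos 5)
Your proof is correct and takes essentially the same route as the paper's: a non-stationary-phase operator built from $\nabla\phi/\|\nabla\phi\|^2$, iterated $N$ times, with a structural induction showing that each derivative of the coefficient functions is a sum of products of derivatives of $\phi$ over a power of the gradient norm, where all but one factor is bounded by a constant (comparing the maximal monomial of $\cn(\phi)$ against Lemma \ref{lem1}) and the single leftover factor yields $\varep^{-\alpha}$. The only difference is cosmetic: you work with the Euler operators $x_j\partial_j$ directly on the box $[\varep,4\varep]$, whereas the paper first rescales to $[1,4]^d$ and uses ordinary derivatives; the two are equivalent under the change of variables $x=\varep y$.
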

	
	With the help of Lemma \ref{thm1} we prove a quantitative generalization of Varchenko's upper bounds. Below we use the notation $\lf\beta+\bone\rf$, which is explained in the following section. The most important fact to keep in mind is $c=\lf\beta+\bone\rf$ is the constant such that $(\beta+\bone)/c$ is contained in $\partial\cn(\phi).$ For example, Varchenko's estimate is the case $\beta=0:$ the vector $\bone/c\in\partial\cn(\phi)$ if and only if $c=1/t,$ where $t$ is the Newton distance. This lemma may be particularly useful to the readers when $\phi$ satisfies nondegeneracy after a change of coordinates.
	
	\begin{theorem}
		Assume $\phi$ is analytic nondegenerate or $k-$nondegenerate and let $\psi:\R^d\to\R$ be $C^k$ supported close enough to the origin. Assume $\beta\in\N^d$ satisfies $\lf\beta+\bone\rf<k-1$. Let $d_\beta$ be the greatest codimension over all faces of $\cn(\phi)$ containing $(\beta+\bone)/\lf\beta+\bone\rf.$ There is a uniform constant independent of $\lambda>2$ such that
		\eqnn{\Bigg{|}\int_{\R^d}e^{i\lambda\phi(x)}x^\beta\psi(x) dx\Bigg{|}\lesssim \lambda^{-\lf\beta+\bone\rf}\log^{d_\beta-1}(\lambda)}
		If in addition $\phi$ and $\psi$ are smooth then the inequality holds for all $\beta\in\N^d.$
		\label{wutcor}		
	\end{theorem}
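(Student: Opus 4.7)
The plan is to dyadically decompose $\psi$, apply Lemma~\ref{thm1} on each piece, optimize the stationary-phase bound on each box, and sum the contributions using the lattice-point geometry of $\cn(\phi)$ near $\gamma=(\beta+\bone)/\lf\beta+\bone\rf\in\partial\cn(\phi)$.

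First I would fix $\eta\in C_c^\infty(\R^d)$ supported in $[1,4]^d$ giving a partition of unity near the origin on the positive orthant (other orthants are handled by reflection), and write $I(\lambda)=\sum_{\mathbf{j}}I_{\mathbf{j}}(\lambda)$ with
\[I_{\mathbf{j}}(\lambda):=\int_{\R^d}e^{i\lambda\phi(x)}x^\beta\psi(x)\,\eta(2^{j_1}x_1,\ldots,2^{j_d}x_d)\,dx,\]
where $\mathbf{j}\in\N^d$ satisfies $j_i\ge J_0$ for $J_0$ large enough that $\varep:=2^{-\mathbf{j}}$ meets the smallness requirement of Lemma~\ref{thm1} and the supports jointly cover $\supp\psi$. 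Lemma~\ref{thm1} then yields, for every $\alpha\in\cn(\phi)$ and every admissible integer $N$,
\[|I_{\mathbf{j}}(\lambda)|\lesssim \lambda^{-N}\,2^{\mathbf{j}\cdot(N\alpha-\beta-\bone)}.\]
Setting $m(\mathbf{j}):=\min_{\alpha\in\cn(\phi)}\mathbf{j}\cdot\alpha$ (attained on $\partial\cn(\phi)$) and taking $\alpha$ to realize it, the per-box bound collapses to $|I_{\mathbf{j}}(\lambda)|\lesssim 2^{-\mathbf{j}\cdot(\beta+\bone)}(2^{m(\mathbf{j})}/\lambda)^{N}$.

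Next I optimize per box by the sign of $2^{m(\mathbf{j})}-\lambda$. On $\{m(\mathbf{j})\ge\log_2\lambda\}$ take $N=0$, keeping the size bound $2^{-\mathbf{j}\cdot(\beta+\bone)}$. On $\{m(\mathbf{j})<\log_2\lambda\}$ take $N$ maximal --- in the smooth case $N\to\infty$ kills the contribution, while in the $C^k$ case the hypothesis $\lf\beta+\bone\rf<k-1$ permits $N=k-1$ to yield a genuine decay factor $(2^{m(\mathbf{j})}/\lambda)^{k-1}$. For the size piece I invoke LP duality: the minimum of $\mathbf{j}\cdot(\beta+\bone)$ over $\{m(\mathbf{j})\ge M\}$ equals exactly $\lf\beta+\bone\rf M$ by the definition of $\lf\beta+\bone\rf$, with the optimum attained on the normal cone $N(F_\gamma)$ of the smallest face $F_\gamma$ of $\cn(\phi)$ containing $\gamma$, sliced by $\mathbf{j}\cdot\gamma=M$. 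This slice is $(d_\beta-1)$-dimensional and carries $\lesssim M^{d_\beta-1}$ lattice points of weight $\asymp 2^{-\lf\beta+\bone\rf M}$; points farther from it acquire the extra exponentially small factor $2^{-\lf\beta+\bone\rf(\mathbf{j}\cdot\gamma-m(\mathbf{j}))}$ and sum to the same order. Taking $M=\log_2\lambda$ gives the main bound $\lesssim(\log\lambda)^{d_\beta-1}\lambda^{-\lf\beta+\bone\rf}$. The $C^k$ tail over $\{m(\mathbf{j})<\log_2\lambda\}$ is controlled by the same count applied to each level $\{m(\mathbf{j})\in[M',M'+1)\}$: level $M'$ contributes $\lesssim M'^{d_\beta-1}\,2^{-\lf\beta+\bone\rf M'}\cdot 2^{(k-1)(M'-\log_2\lambda)}$, and the condition $k-1>\lf\beta+\bone\rf$ renders the sum over $0\le M'<\log_2\lambda$ geometric in $M'-\log_2\lambda$, totalling $(\log\lambda)^{d_\beta-1}\lambda^{-\lf\beta+\bone\rf}$.

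The hardest step will be the lattice-point count: isolating $N(F_\gamma)$ as the unique source of the full $(\log\lambda)^{d_\beta-1}$ logarithm, showing all competing normal cones contribute strictly lower order via a uniform positive lower bound on the gap $\mathbf{j}\cdot\gamma-m(\mathbf{j})$ away from $N(F_\gamma)$, and handling the degenerate case where $N(F_\gamma)$ meets a coordinate hyperplane so that the constraint $\mathbf{j}\ge J_0\bone$ excludes its interior (repaired by replacing the true optimizer with a nearby lattice direction at bounded cost). Modulo this combinatorial geometry, the analytic content is exhausted by Lemma~\ref{thm1} and routine per-box optimization.
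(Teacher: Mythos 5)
Your skeleton is the same as the paper's --- dyadic partition of unity on each orthant, the per-box stationary-phase bound of Lemma \ref{thm1}, then optimization over $N$ and $\alpha\in\cn(\phi)$ followed by summation --- but you execute the summation by a genuinely different route. The paper passes from the lattice sum to a continuous integral over the box $[0,\log(\lambda)/v_1]\times\cdots\times[0,\log(\lambda)/v_d]$, chooses $r$ linearly independent vertices $\alpha^i$ of the minimal face containing $\gamma=(\beta+\bone)/\lf\beta+\bone\rf$, makes the linear change of variables $A\alpha^i=\bee_i$, integrates out $d-r$ directions to produce $\log^{d_\beta-1}(\lambda)$, translates by $\log(\lambda)\bone$ to extract $\lambda^{-\lf\beta+\bone\rf}$, and proves convergence of the remaining integral from the convexity identity $\theta_0(-\beta-\bone)+\sum\theta_i(N\alpha^i-\beta-\bone)=\bz$ with positive weights. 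You instead foliate by level sets of $m(\mathbf{j})=\min_{\alpha\in\cn(\phi)}\mathbf{j}\cdot\alpha$ and count lattice points in the normal fan, using the identity $\mathbf{j}\cdot(\beta+\bone)=\lf\beta+\bone\rf\,\mathbf{j}\cdot\gamma$ and the fact that $\mathbf{j}\cdot\gamma-m(\mathbf{j})$ vanishes exactly on the $d_\beta$-dimensional cone $N(F_\gamma)$. This is correct and arguably more transparent geometrically (it makes visible \emph{why} the log power is $d_\beta-1$), at the cost of pushing all the work into the lattice-point estimate you defer: to close it you need that $\mathbf{j}\cdot\gamma-m(\mathbf{j})$ is positively homogeneous and bounded below by a constant times the distance to $N(F_\gamma)$ on $\{m=1\}$, so that $|\{g\le s\}\cap\{m= M\}|\lesssim M^{d_\beta-1}s^{d-d_\beta}$ and the damped sum collapses to $M^{d_\beta-1}$; this homogeneity-plus-positivity step is precisely the analogue of the paper's $\theta_i$ convexity argument, so it is not optional. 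Two small corrections: you cannot literally ``take $N\to\infty$'' in the smooth case, since the implicit constant in Lemma \ref{thm1} depends on $N$ --- fix any single $N>\lf\beta+\bone\rf$ and run the same geometric sum you wrote for the $C^k$ case; and the ``degenerate'' worry about $N(F_\gamma)$ lying in a coordinate hyperplane is harmless for the upper bound (fewer admissible lattice points only helps), so no repair is needed there.
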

	
	For the last theorem, we briefly introduce a well-ordered set $\mathcal{E}$ such that $-\mathcal{E}=\{x:-x\in\mathcal{E}\}$ contains all of the exponents appearing in the asymptotic expansion of $I(\lambda)$ (described in full detail directly below, in section \ref{E}). Let $\cn(\phi)$ be the Newton polyhedron of $\phi.$ For all positive integer lattice points $\beta$, consider all $c>0$ such that $\beta/c$ lies in $\partial\cn(\phi)$. The set $\mathcal{E}$ (relative to $\cn(\phi)$) is generated by considering $c-n$ over $n$ in a bounded subset of $\N.$ Finally, let $1/t=p_0<p_1<\cdots$ be the well-ordering of $\mathcal{E}.$ We let $d_j$ be the maximum codimension over all faces containing any $\beta\in\N^d$ such that $p_j$ can be written as $p_j=\lf\beta+\bone\rf-n$ for some $n\in\N.$ For example, if $\cn(\phi)$ has a unique compact codimension 1 face that intersects each coordinate axis, we can guarantee that each $d_j=1.$ 
	
	\begin{theorem} Assume $\phi$ is analytic nondegenerate or $k-$nondegenerate. Let $p_0<p_1<\cdots p_j<\cdots$ and $1\le d_j\le d$ be as above for $j\ge 0$ with respect to $\cn(\phi)$. 	
		\begin{itemize}
			\item[(i)] Assume $\phi$ is analytic or smooth. Let $\psi:\R^d\to\R$ be smooth and supported close enough to the origin. Then, there are $a_{j,r}(\psi)\in\C$ such that 
			\eqn{I(\lambda):=\int_{\R^d}e^{i\lambda\phi(x)}\psi(x)dx\sim \sum_{j=0}^n\sum_{r=0}^{d_j-1}a_{j,r}(\psi)\lambda^{-p_j}\log^{d_j-1-r}(\lambda)\label{expansion}}
			for all $\lambda >2$ and all $n\in\N.$ 
			
			\item[(ii)] Assume $\phi\in C^m$ is $k-$nondegenerate and $\psi:\R^d\to\R$ is $C^{m}$ supported close enough to the origin. If $m>(k+1)(d_0+\cdots+d_n)+kp_n+d$,\footnote{This smoothness is not sharp as a byproduct of the proofs used. (Compare to van der Corput in $d=1:$ here we need $C^{k+4}$ instead of $C^k$.)} then there are constants $a_{j,r}(\psi)\in\C$ such that for $\lambda>2$ we have the finite expansion
			\eqnn{I(\lambda)\sim\sum_{j=0}^n\sum_{r=0}^{d_j-1}a_{j,r}(\psi)\lambda^{-p_j}\log^{d_j-1-r}(\lambda).}
			Moreover, the bound for the error term is
			\eqnn{\Bigg{|}I(\lambda)- \sum_{j=0}^n\sum_{r=0}^{d_j-1}a_{j,r}(\psi)\lambda^{-p_j}\log^{d_j-1-r}(\lambda)\Bigg{|}\lesssim \lambda^{-p_{n+1}}\log^{d'-1}(\lambda)}
			where $d'$ is the largest codimension of $\cn(\phi)$ over all faces not contained in coordinate hyperplanes, and the implicit constant is independent of $\lambda\neq 0$.
		\end{itemize}
		\label{thm2}
	\end{theorem}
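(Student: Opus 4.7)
\medskip

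\noindent\textbf{Proof plan.} The plan is to derive and iteratively solve a sequence of first-order differential identities for $I(\lambda)$, in the spirit of the standard one-dimensional argument. The key operator is $\lambda\tfrac{d}{d\lambda}+p$: it annihilates $\lambda^{-p}$ and, applied $d$ times, annihilates the entire family $\{\lambda^{-p}\log^{d-1-r}(\lambda)\}_{r=0}^{d-1}$. The goal is to show that $(\lambda\tfrac{d}{d\lambda}+p_0)^{d_0}I(\lambda)$ decays strictly faster than $\lambda^{-p_0}$, then back-integrate to extract the leading family $\sum_{r=0}^{d_0-1}a_{0,r}(\psi)\lambda^{-p_0}\log^{d_0-1-r}(\lambda)$; iterating on the remainder produces the subsequent families corresponding to $p_1,p_2,\ldots$.

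To set up the basic ODE, choose an outward normal $a=(a_1,\dots,a_d)$ to the face $F$ of $\cn(\phi)$ containing $\bt=t\bone$, normalized so that $\langle a,\alpha\rangle=m$ for every $\alpha\in F$; the identity $\langle a,\bt\rangle=m$ gives $|a|/m=1/t=p_0$. Let $L_a=\sum_j a_j x_j\partial_j$, so that $L_a\phi=m\phi+\phi_{\mathrm{corr}}$, where $\phi_{\mathrm{corr}}$ collects the monomials of $\phi$ off $F$, each with strictly larger $a$-weight. Applying $L_a$ inside $e^{i\lambda\phi}$ and performing one integration by parts yields
\eqnn{\left(\lambda\tfrac{d}{d\lambda}+p_0\right)I(\lambda)=-\tfrac{1}{m}\int_{\R^d}e^{i\lambda\phi}L_a\psi\,dx-\tfrac{i\lambda}{m}\int_{\R^d}e^{i\lambda\phi}\phi_{\mathrm{corr}}\psi\,dx.}
Each integral on the right is controlled by Theorem \ref{wutcor}: the first as a sum over $j$ of $\int e^{i\lambda\phi}x^{\bee_j}(a_j\partial_j\psi)\,dx\lesssim\lambda^{-\lf\bee_j+\bone\rf}\log^{d_{\bee_j}-1}(\lambda)$, and the second as $\sum_{\alpha}c_\alpha(\langle a,\alpha\rangle-m)\lambda\int e^{i\lambda\phi}x^\alpha\psi\,dx\lesssim\lambda^{1-\lf\alpha+\bone\rf}\log^{d_\alpha-1}(\lambda)$. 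Since $\alpha\notin F$ forces $\langle a,\alpha\rangle>m$, the boundary scaling $\lf\alpha+\bone\rf$ is at least $1+p_0$; when equality occurs because another face of $\cn(\phi)$ becomes binding, $d_\alpha$ is strictly smaller than the codimension of $F$, so the overall bound still beats $\lambda^{-p_0}\log^{d_0-1}(\lambda)$, and an analogous dichotomy applies to the $\bee_j$-terms.

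Iterating the procedure — at each stage rederiving an ODE for the new right-hand side by another IBP and invocation of Theorem \ref{wutcor} — gives, after $d_0$ applications, $(\lambda\tfrac{d}{d\lambda}+p_0)^{d_0}I(\lambda)=O(\lambda^{-p_1}\log^{d_1-1}(\lambda))$. Back-integrating with the factor $\lambda^{p_0}$ reconstructs the $p_0$-family, the integration constants providing the coefficients $a_{0,r}(\psi)$. The same scheme applied to the remainder, now with weights adapted to the next face relevant to $p_1$, extracts the $p_1$-family, and so on through the well-ordered set $\mathcal{E}$. For (ii), each IBP consumes derivatives of $\phi$ and $\psi$, and Theorem \ref{wutcor} further requires $\lf\beta+\bone\rf<k-1$; tracking these losses across $d_0+\cdots+d_n$ iterations yields the smoothness hypothesis $m>(k+1)(d_0+\cdots+d_n)+kp_n+d$ and the stated error bound.

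The main obstacle is the precise combinatorial bookkeeping linking the iteration to the intrinsic indices $(p_j,d_j)$. The subtlety is that ``improvement'' after one IBP can occur either by a strictly better decay exponent (when $\lf\alpha+\bone\rf>1+p_0$) or by a strictly smaller log power (when equality holds but $d_\alpha<d_0$); the iteration must be organized so that the total drop in the exponent–log-power lexicographic order is exactly what the Newton polyhedron predicts. Verifying that exactly $d_j$ applications of $(\lambda\tfrac{d}{d\lambda}+p_j)$ are needed at the $j$-th step, and that the resulting exponents of the form $\lf\beta+\bone\rf-n$ exhaust $\mathcal{E}$ in the right order, is the technical core and requires a careful analysis of which faces of $\cn(\phi)$ contain the scaled lattice points $(\beta+\bone)/\lf\beta+\bone\rf$ at each stage of the iteration.
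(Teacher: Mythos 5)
Your proposal follows essentially the same route as the paper: apply the Euler-type operator $wx\cdot\nabla$ adapted to a facet normal, integrate by parts to produce $(\lambda\frac{d}{d\lambda}+p_j)I$, invoke Theorem \ref{wutcor} together with the exponent/log-power dichotomy (Proposition \ref{asymprop:p:3}) to show each application strictly improves the bound, and then solve the resulting iterated ODE inequality (the paper's Lemma \ref{diflem}) to recover the coefficients. The combinatorial bookkeeping you defer is exactly where the paper spends its effort — an explicit induction hypothesis on the allowed forms of the integrals after each application, plus Lemma \ref{corcor} and a division argument to handle $R_\ell - wx\cdot\nabla R_\ell$ — but your plan correctly identifies all the ingredients and the structure of that induction.
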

	As guaranteed, the same estimates hold for all $\lambda\neq 0$ if $\lambda^{-p}$ and $\log^q(\lambda)$ are replaced by $|\lambda|^{-p}$ and $\log^q(|\lambda|+2),$ respectively. Henceforth we assume $\lambda>2$ for notational convenience.
	\section{The Newton polyhedron\label{NewtonChapter}}
	\subsection{Why we need the Newton polyhedron}
	\begin{wrapfigure}{R}{0pt}
		\centering
		\captionsetup{justification=centering}
		\begin{tikzpicture}
		\draw[<->] (0,6.4)--(0,0)--(5,0);
		\draw[fill=lightgray!30,lightgray!30] (1,6.4)--(1,3)--(2,2)--(5,2)--(5,6.4);
		\draw[thick](1,6.4)--(1,3)--(2,2)--(5,2);
		\draw[<->,blue,thick](-.1,6.3)--(2.05,-.15);
		\draw[->,red,thick](1,3)--(2,10/3);
		\node[above right, red] at (2,10/3){$w=(1/2,1/6)$};
		\node[above right, blue] at (2,0){(2,0)};
		\node[below right, blue] at (0,6){(0,6)};	
		\node[below left] at (1,3) {(1,3)};
		\node[below left, blue] at (1.6,1.5) {$H_w$};
		\node[left] at (4,5) {(4,5)};
		\node[above right] at (2,2) {(2,2)};
		\draw[fill] (0,6) circle [radius=.03];
		\draw[fill] (2,0) circle [radius=.03];
		\draw[fill] (1,3) circle [radius=.03];
		\draw[fill] (4,5) circle [radius=.03];
	\end{tikzpicture}
	\caption{A supporting hyperplane of $\cn(\phi)$ for $\phi(x,y)=x^2y^2+xy^3-x^4y^5$.}
	\end{wrapfigure}
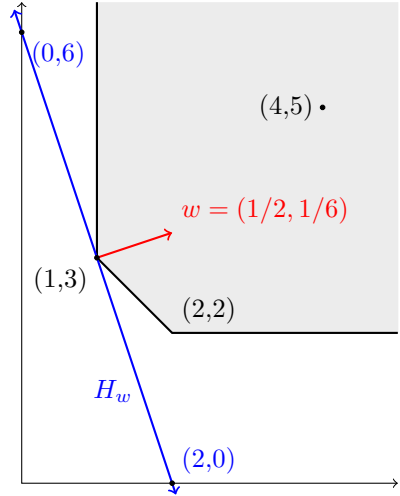
	
	The Newton polyhedron of an analytic function $\phi$ contains all of the information required for determining which monomials are largest near the origin for $x\in(0,1)^d$, which is precisely what Lemma \ref{lem1} quantifies. Given a supporting hyperplane of $\cn(\phi)$ not containing the origin, we can parametrize it by $H_w=\{\xi\in\R^d:\xi\cdot w=1\}$ for some $w\in \R_\ge^d.$ Since $\beta\cdot w\ge 1$ for all $\beta\in\cn(\phi),$ the polyhedron shows $\alpha\in H_w\cap \supp(\phi)$ implies $x^\alpha$ is the largest over all $\alpha\in \supp(\phi)$, under the scalings $x_i^{w_i}=x_j^{w_j}$ over all $1\le i,j\le d$. For example, Figure 2 shows the \textcolor{blue}{hyperplane $H_w$}, supporting at $(1,3)$ parametrized by normal \textcolor{red}{$w=(1/2, 1/6)$}. One can \textit{see} that $x^4y^5$ can never be larger than $\max\{x^2y^2, xy^3\}$ since $(4,5)$ does not lie on a compact face. 
	
	Although the Newton polyhedron contains information about the largest monomials, cancellation between monomials prevents Lemma \ref{lem1} from being true in general. The nondegeneracy condition guarantees that the Newton polyhedron gives us the necessary information about the behavior of $\nabla\phi$. The equivalence from Lemma \ref{lem1} provides an alternative, analytic way to impose nondegeneracy.
	
	
	
	\subsection{Normal vectors of the Newton polyhedron}
	We briefly discuss a subset of linear functionals on $\R^d$, namely the set 
	\eqnn{\{w\in\R_\ge^d: \xi \cdot w =1\text{ for some  }\xi\in \partial \cn(\phi)\}.} 
	For Lemma \ref{lem1}, we care about the whole set, but afterwards we will only care about those finitely many $w$ corresponding to codimension 1 faces not contained in coordinate hyperplanes. From now on such faces will be called \textbf{facets}. Additionally, \textbf{supporting hyperplanes} $H$ of $\cn(\phi)$ refer \textit{only to those supporting hyperplanes not containing the origin}, so that any supporting hyperplane $H$ can be parametrized by $\{\xi : \xi \cdot w=1\}=:H_w.$ There is a nice geometric way to show such normals exist: if $w=(w_1,\dotsc, w_d),$ then $H_w$ intersects the coordinate axes at $x_i=w_i^{-1}$ whenever $w_i\neq 0$, and does not intersect the $x_i$ axis if $w_i=0$, exactly as in Figure 2. This implies $w_j=0$ if and only if $w$ is a normal to a hyperplane intersecting $\cn(\phi)$ in some unbounded face. The following section explains one important reason why we consider such a parametrization.
			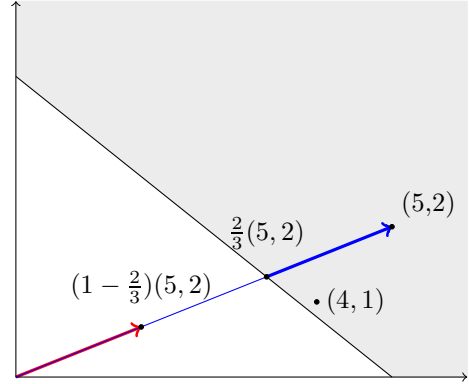
\begin{wrapfigure}{R}{0pt}
				\centering
				\captionsetup{justification=centering}
				\begin{tikzpicture}
				\draw[fill=lightgray!30,lightgray!30] (0,4)--(0,5)--(6,5)--(6,0)--(5,0);
				\draw[->,very thick,red] (0,0)--(5/3,2/3);
				\draw[<->] (0,5)--(0,0)--(6,0);
				\draw (0,4)--(5,0);
				\draw[->,blue] (0,0)--(5,2);
				\draw[->,very thick,blue] (10/3,4/3)--(5,2);
				\node[above right] at (5,2) {(5,2)};
				\draw[fill] (5,2) circle [radius=.03];
				\draw[fill] (4,1) circle [radius=.03];
				\draw[fill] (10/3,4/3) circle [radius=.03];
				\draw[fill] (5/3,2/3) circle [radius=.03];
				\node[above=.25cm] at (10/3,4/3) {$\frac{2}{3}(5, 2)$};
				\node[above=.2cm] at (5/3,2/3) {$(1-\frac{2}{3})(5, 2)$};
				\node[right] at (4,1) {$(4, 1)$};
				\end{tikzpicture}
				\caption{An exponent of the asymptotic expansion of $I(\lambda)$ for $\phi(x,y)=x^5+y^4+x^4y$ not simply of the form $-\lf\alpha+\bone\rf$: the scaling is that of $\frac{1}{3}(5,2)$ instead of $(5,2).$\vspace{1cm}}
			\end{wrapfigure}
			
		\subsection{The set $\mathcal{E}$ of exponents in Theorem \ref{thm2}\label{E}}
		In order to understand geometrically the exponents appearing in the expansion, we define some geometric notions corresponding to scalings in terms of the Newton polyhedron. In this section, let $W$ be the finite set of all normals corresponding to facets of $\cn(\phi).$ For $\alpha\in \N^d$ define
		\eqnn{\hspace{-6.5cm}\lf\alpha\rf=\min_{w\in W} \alpha\cdot w,}
		and define the minimizing set
		\eqnn{\vspace{.04cm}\hspace{-6.5cm}\bn(\alpha)= \{w\in W: \alpha\cdot w = \lf\alpha\rf\}.}
		Note that $\alpha/\lf\alpha\rf\in\partial\cn(\phi)$  if $\lf\alpha\rf\neq 0,$ and $\alpha$ lies in a coordinate hyperplane if $\lf\alpha\rf=0.$ Thus, $\lf\alpha\rf$ can be defined independently of our parametrization. All exponents appearing in the asymptotic expansion are contained in the set $-\mathcal{E}=\{n-\lf\alpha+\bone\rf:\alpha\in\N^d, n\in\N\}$, i.e., if $\lambda^{-p}$ has nonzero coefficient then $p$ must lie in $\mathcal{E}.$ More precisely, if $\lf\alpha+\bone\rf-n\in\mathcal{E}$ then we can write $\alpha=\alpha^1+\cdots+\alpha^n$ for some $\alpha^i\in\cn(\phi)\cap\N^d.$ In particular, Proposition \ref{asymprop:p:2} will guarantee that $\lf\beta+\bone\rf-n\ge 1/t,$ since it shows $\lf\alpha+\beta\rf\ge \lf\alpha\rf+\lf\beta\rf$ for all $\alpha,\beta\in\N^d$. By a classical result answering when an arbitrary integer lattice point inside a cone is a  positive integer sum of the generators of the cone, one can reduce \lq\lq $n\in\N$\rq\rq to \lq\lq $n\in \{0,1,\dots, c_d\}$\rq\rq where $c_d$ depends only on the dimension (the $+\bone$ adds a minor difficulty to this classical question). 
		
		Figure 3 shows the Newton polyhedron of $\phi(x,y)=x^5+y^4+x^3y^2.$ By Theorem \ref{thm2}, the first exponent in the expansion is predicted to be $-1/t=-9/20$. The second is predicted to be $-1/2$: indeed $(5,2)$ provides us such an example: $(5,2)=(4,1)+\bone$ with $(4,1)\in\cn(\phi)$ and $1-(5,2)\cdot(1/5,1/4)=-1/2.$ Next, $(4,3)=(3,2)+\bone$ with $n=1$ produces $-11/20,$ which again cannot be written as $-(\beta+\bone)\cdot (1/5,1/4)$ for any $\beta\in\N^d.$ The method in Cho-Kamimoto-Nose\cite{kn16toric} predicts a noticeably larger second term $\lambda^{-7/15}>\lambda^{-1/2}$ in this case (but additional elementary methods can be used to show the coefficient of this term is zero).
		
		\subsection{Further clarifications regarding Theorem \ref{thm2}}
		Some statements of Theorem \ref{thm2} still require clarification. First, we show that $\{\lf\alpha+\bone\rf\}_{\alpha\in\N^d}$ runs through finitely many arithmetic progressions of positive rationals, which implies $\mathcal{E}$ is well-ordered. Each normal $w$ of a facet $F$ of $\cn(\phi)$ can be uniquely defined by $d$ linearly independent vectors $\alpha^i$ in $\supp(\phi)\cap F$. If $A$ is the matrix with rows $\alpha^i,$ then by definition, $w$ satisfies $Aw=\bone$. Hence, $w=A^{-1}\bone.$ The matrix $A^{-1}$ must have rational entries, since $A$ has rational entries, therefore $w\in\Q^d.$  By geometric considerations, $w$ must have nonnegative entries. The Newton polyhedron has finitely many facets, so there are finitely many such $w$. Writing each component $w_j$ of a fixed normal $w$ as $w_j=r_j/q_j$ where $r_j, q_j$ are integers, let $q_w$ be the lowest common multiple of the $q_j$ corresponding to $w.$ The arithmetic progressions are generated by the rationals $1/q_w$ over all normals $w$ of facets of $\cn(\phi).$ In the example corresponding to Figure 3, indeed $20=\text{lcm}(4,5).$
		
		If $t>1,$ Varchenko showed that the first term of the expansion (\ref{expansion}) with nonzero coefficient is $\lambda^{-1/t}\log^{d_0-1}(\lambda),$ where $d_0$ is the largest codimension over all faces containing $\mathbf{t}$. Indeed, this is the first term guaranteed by Theorem \ref{thm2} since $\lf\bt\rf=1$ implies $\lf \bone\rf=1/t$. Moreover, before the proof of Theorem \ref{thm2}, we will have developed the necessary techniques to show that $n-\lf\beta+\bone\rf=-1/t$ implies $(\beta+\bone)/\lf\beta+\bone\rf$ must lie on a face of codimension at most $d_0.$

	\section{Proof of Lemma \ref{lem1}}\label{POL}
	All results build on Lemma \ref{lem1}, which says that nondegeneracy of $\phi$ implies the sharpest possible decay rate for $\nabla\phi$ near the origin. The lemma closely resembles Lojasiewicz's famous theorem about decay of analytic functions near their singularities\cite{lojasiewicz59}, and is also closely related Lemma 3.6 (for nondegenerate analytic and smooth functions) of Greenblatt\cite{greenblatt12}, but does not follow from either result. The lemma is also very closely related to work of Yoshinaga\cite{yoshi89}. 
	
	For the rest of the section, we consider nondegenerate $\phi:\R^d\to\R$ and we write $\phi=P_k+R_k,$ where \eqnn{P_k(x)=\sum_{|\alpha|\le k} c_{\alpha}x^{\alpha},\text{ and } R_k(x)=\sum_{|\alpha|= k} h_{\alpha}(x)x^{\alpha}}
	are guaranteed by Taylor's theorem. We take $k$ such that $\cn(\phi)=\cn(P_k)$: for analytic functions, $k$ is guaranteed because the Newton polyhedron only has finitely many extreme points, and for $k-$nondegenerate $\phi$ we are considering only convenient $\cn(P_k)$, so that $\cn(\phi)$ still accurately describes the decay of $\phi$. This can also be accomplished assuming other \lq\lq finite type\rq\rq conditions. Exactly what we need in order for the Newton polyhedron to be useful for $C^k$ phases is summarized by the following nice property of analytic functions: we can choose $h_\alpha\equiv 0$ in the remainder whenever $|\alpha|=k$ is such that $\alpha\notin\cn(\phi)$.
	
	By Taylor's theorem,
	\eqn{x_j\partial_j\phi(x)= \sum_{|\alpha|\le k} c_{\alpha}' x^{\alpha} + \sum_{|\alpha|= k} h_{\alpha}'(x) x^{\alpha}\label{phij}}
	for all $1\le j\le d,$ where $c_{\alpha}'=\alpha_jc_{\alpha},$ and $h_{\alpha}'$ depends on $j$. For each compact $F\subset \cn(\phi),$ we write $x_j\partial_j P_k(x)$ in (\ref{phij}) as
	\eqn{\sum_{\alpha\in F}c_{\alpha}'x^{\alpha} + \sum_{\alpha\notin F}c_{\alpha}'x^{\alpha}.\label{splitsum}}
	The main goal is to show for all $\varep$ small enough there is a compact $F$ so that the significant contribution of $\varep_j\partial_j\phi(\varep)$ comes from the polynomial on the left of (\ref{splitsum}) for some $1\le j\le d$.
	
	\subsection{Supporting hyperplanes of $\cn(\phi)$ and scaling}
	The following proposition is used to define some constants necessary for applying nondegeneracy to (\ref{phij}).
	
	\begin{proposition}\label{scaleprop}
		Let $\varep\in (0,1)^d$ and let $\beta, \alpha^1,\dotsc, \alpha^{n}\in\R^d$ be linearly independent. Assume for all $1\le i\le n$ there is a positive $C<1$ such that $C\varep^{\beta}\le \varep^{\alpha^i}\le \varep^{\beta}.$ There is some $b\in (0,1)$ depending only on $\alpha^1,\dotsc, \alpha^n$ and $C,$ such that for some $y\in [b,b^{-1}]^d$ and all $1\le i\le n$ we have
		\eqn{y^{\alpha^i}=\varep^{\alpha^i-\beta}.\label{eqlts}}
		Moreover, if $\alpha=\sum \lambda_i \alpha^i$ for $\sum\lambda_i=1,$ then 
		\eqn{y^\alpha=\varep^{\alpha-\beta}\label{scaleprop2}}
	\end{proposition}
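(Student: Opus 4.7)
The plan is to take logarithms and reduce (\ref{eqlts}) to a linear system. Since $\varep_j\in(0,1)$, I would write $\varep_j=e^{-v_j}$ with $v_j>0$ and seek $y_j$ of the form $e^{u_j}$. Under these substitutions the desired equality $y^{\alpha^i}=\varep^{\alpha^i-\beta}$ becomes
\begin{equation*}
u\cdot\alpha^i=v\cdot(\beta-\alpha^i),\qquad i=1,\dotsc,n,
\end{equation*}
a system of $n$ linear equations for $u\in\R^d$. The hypothesis $C\varep^\beta\le\varep^{\alpha^i}\le\varep^\beta$ translates, after $-\log$, to $0\le v\cdot(\alpha^i-\beta)\le\log(1/C)$, so each right-hand side of the linear system has magnitude at most $\log(1/C)$, uniformly in $\varep$.

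The key point is that the $\alpha^i$ are linearly independent, so the $n\times d$ matrix $A$ with rows $\alpha^1,\dotsc,\alpha^n$ has full row rank, and the system $Au=w$ is solvable for every $w\in\R^n$. Taking the minimum-norm solution $u=A^T(AA^T)^{-1}w$, one has $\|u\|_\infty\le C_A\|w\|_\infty$, where $C_A$ depends only on $\alpha^1,\dotsc,\alpha^n$. Plugging in $w_i=v\cdot(\beta-\alpha^i)$ gives $\|u\|_\infty\le C_A\log(1/C)=:M$, so setting $b=e^{-M}\in(0,1)$ yields $y_j=e^{u_j}\in[b,b^{-1}]$ as claimed.

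For the second statement (\ref{scaleprop2}), if $\alpha=\sum_i\lambda_i\alpha^i$ with $\sum_i\lambda_i=1$, then by linearity
\begin{equation*}
u\cdot\alpha=\sum_i\lambda_i\,u\cdot\alpha^i=\sum_i\lambda_i\,v\cdot(\beta-\alpha^i)=v\cdot\beta-v\cdot\alpha,
\end{equation*}
and exponentiating produces $y^\alpha=e^{u\cdot\alpha}=e^{-v\cdot(\alpha-\beta)}=\varep^{\alpha-\beta}$.

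The main obstacle, and the reason the hypothesis assumes linear independence, is ensuring that $b$ depends only on $\alpha^1,\dotsc,\alpha^n$ and $C$, not on $\varep$. The pseudoinverse makes this transparent: its operator norm is a fixed function of the $\alpha^i$'s, and the right-hand side of the linear system is bounded purely in terms of $C$. Beyond that, the argument is just the logarithmic change of variables together with elementary linear algebra.
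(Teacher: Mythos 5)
Your proof is correct and follows essentially the same route as the paper's: take logarithms to turn \eqref{eqlts} into a linear system, use full row rank of the matrix of $\alpha^i$'s to solve it with a solution whose size is controlled by a fixed operator norm times $\log(1/C)$, and note that the hypothesis $C\varep^\beta\le\varep^{\alpha^i}\le\varep^\beta$ bounds the right-hand side uniformly in $\varep$. The only cosmetic difference is that you use the minimum-norm (pseudoinverse) solution where the paper inverts a full-rank $n\times n$ submatrix and sets the remaining coordinates of $y$ to $1$; both yield a $b$ depending only on the $\alpha^i$ and $C$.
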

	
	\begin{proof}
		Let $A$ be the $n\times d$ matrix with rows $\alpha^1,\dotsc,\alpha^{n}$. Without loss of generality, assume that the first $n$ columns of $A$ are linearly independent. Let $v\in\R^n$ be the vector defined componentwise by $v_i=\log(\varep^{\alpha^i-\beta})$. Consider $\tilde{A}u=v$ where $\tilde{A}=(\alpha_i^j)_{1\le i,j\le n}.$ Since $\tilde{A}$ has full rank, we can solve $u=\tilde{A}^{-1}v$. Writing $\rho=\|\tilde{A}^{-1}\|_\infty,$ we bound
		\eqnn{\|u\|_\infty\le \|\tilde{A}^{-1}\|_\infty \|v\|_1 \le \rho\|v\|_1 .} 
		Therefore $-\|v\|_1\rho\le u_i\le \|v\|_1\rho$ for all $i.$ Now we can find precisely which  box we seek:
		\eqnn{C^{d\rho}\le \varep^{\rho(\alpha^1+\cdots+\alpha^n-n\beta)} \le 2^{u_i}\le  \varep^{-\rho(\alpha^1+\cdots+\alpha^n-n\beta)}\le C^{-d\rho }.}
		Hence, letting $b=C^{d\rho}\in (0,1)$, we see that the vector $y\in [b,b^{-1}]^d$ defined by $y=2^{u}$ satisfies the system of equations (\ref{eqlts}). Finally, (\ref{scaleprop2}) follows from rewriting $\alpha-\beta=\sum\lambda_i(\alpha^i-\beta).$
	\end{proof}
	The last part of the proposition is useful because $F\cap \supp(P_k)$ might not be a linearly independent set, but it is always contained in the affine hull of $\dim(F)+1$ many linearly independent vectors contained in $F.$
	
	\subsection{The main proposition: circumventing resolution of singularities}
	Motivated by Proposition \ref{scaleprop}, we define constants required to talk about scaling over faces $F\subset\cn(\phi)$ in order to apply nondegeneracy. 
	
	For any facet $F$ of $\cn(\phi)$ and linearly independent $\alpha^1,\cdots, \alpha^n\in \supp(P_k)\cap F,$ define $A$ to be the $n\times d$ matrix with rows $\alpha^i$ and for each $A$ pick a full rank $n\times n$ submatrix $\tilde{A}$, defined by taking $n$ independent columns of $A.$ Define the constant
	\eqnn{\rho=\max_{A} \|\tilde{A}^{-1}\|_\infty \in (0,\infty),}
	where the maximum is taken over all finitely many possible $A$ ($\supp(P_k)$ is finite). Define the positive real number $a$ to be the maximum
	\eqnn{a=2\max_{1\le j\le d}\sum_{|\alpha|\le k} |c_{\alpha}'| 4^{|\alpha|}.}
	We define the positive constant $C_0$ by
	\eqnn{C_0=\min_{\substack{F\subset \cn(\phi)\\\text{compact}}}\inf_{x\in [1,4]^d}\|x\nabla\phi_F(x)\|.}
	Since $\cn(\phi)=\cn(P_k),$ by definition of $a$ and nondegeneracy of $\phi$, $0<C_0\le a/2.$ Now for $1\le m\le d,$ recursively define the constants
	\eqn{b_{m}=\Big(\frac{C_{m-1}}{a}\Big)^{d\rho},\label{bdef}}
	\eqnn{C'_{m}=\min_{\substack{F\subset \cn(\phi)\\ \text{compact}}}\inf_{x\in [b_{m}, 4b_{m}^{-1}]^d}\|x\nabla\phi_F(x)\|,}
	and finally,
	\eqn{C_{m}=\min\{C'_{m}, C_{m-1}/a\}.\label{cdef}}
	Letting $b_0=1,$ it is easy to see $a>C_0> C_1> \cdots > C_{d-1}>C_d>0$ and therefore $b_0>b_1> \cdots > b_{d-1}>b_d>0.$
	
	If $u\in\cn(\phi)$ does not lie in a compact face, then we can write 
	\eqn{u=v_{u}+\gamma_{u}\label{p}}
	for some $v_{u}$ lying in a compact face and $\gamma_{u}\in\R_\ge^d,$ by definition of polyhedron.\footnote{This follows from Theorem 1.2 of Ziegler's textbook\cite{ziegler95}. To see that the Newton polyhedron is a polyhedron in the sense of Ziegler, one can read Proposition 1 in \cite{thesis}.} Since $\supp(P_k)$ is finite, we can define 
	\eqn{p=\min_{u\in \supp(P_k)} \{\|\gamma_{u}\|,1\}\label{pp}}
	where the minimum is over only those $u$ not lying on any bounded face. One last constant used in the proof of the main proposition requires definition. Recall the parametrization of supporting hyperplanes $H$ not containing the origin: $H=H_{w}=\{\xi  : \xi\cdot w=1\}.$ Define
	\eqnn{\delta'=-1+\min_{\alpha^1,\alpha^2\in \supp(P_k)}\inf_{w}  \Bigg(\frac{\alpha^1+\alpha^2}{2}\cdot w\Bigg)}
	where the minimum is taken over all $\alpha^1, \alpha^2$ not contained in the same facet, and the infimum is taken over all normals $w$ of $\cn(\phi)$. In the case where there exist such $\alpha^1, \alpha^2,$ we claim that $\delta'>0$. Since all $\alpha\in \cn(\phi)$ and all normals $w$ to $\cn(\phi)$ satisfy $\alpha\cdot w \ge 1,$ this simply follows by convexity of $\cn(\phi):$ $\alpha^1$ and $\alpha^2$ must lie on some nontrivial line segment contained in $\cn(\phi)$ but not in $\partial\cn(\phi)$. If there are no such $\alpha^1, \alpha^2,$ let $\delta'=1.$ Define 
	\eqn{\delta=\min\{p,\delta'p\}.\label{dd}}
	
	After proving one more minor result, we can move on to the main proposition required for estimating $x\nabla\phi(x)$.
	
	\begin{proposition}
		Let $j\in\N^d$ satisfy $j_i \ge c>0$ for all $1\le i\le d.$ Assume $w$ is the normal to a supporting hyperplane $H_{w}$ of $\cn(\phi)$ such that $w$ is parallel to $j.$ Then, 
		\eqnn{\frac{|j|}{|w|}\ge c.}\label{propq}
	\end{proposition}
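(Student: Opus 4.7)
The plan is to reduce the inequality to the statement that some vertex $\alpha$ of the face supported by $H_w$ satisfies $\alpha\cdot j\ge c$, which will be straightforward from $\alpha\in\supp(\phi)$ and $j_i\ge c$. The starting observation is that since $w$ is parallel to $j$, and since $j$ has strictly positive components while $w\in\R_\ge^d$ is a nonzero normal, we must have $w=\lambda j$ for some $\lambda>0$. Consequently $|w|=\lambda|j|$, and the whole problem reduces to showing $1/\lambda\ge c$.

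To compute $\lambda$, I would use the supporting hyperplane condition: the face $F:=H_w\cap \cn(\phi)$ is nonempty, and since $\cn(\phi)\subset\R_\ge^d$ contains no line, $F$ has an extreme point $\alpha$, which is automatically an extreme point of $\cn(\phi)$. By the structure $\cn(\phi)=\mathrm{conv}\bigl(\bigcup_{\alpha\in\supp(\phi)}\alpha+\R_\ge^d\bigr)$, every extreme point of $\cn(\phi)$ lies in $\supp(\phi)\subset\N^d$. This $\alpha$ therefore satisfies $\alpha\cdot w=1$, which after substituting $w=\lambda j$ gives $\lambda(\alpha\cdot j)=1$, i.e.\ $\lambda=1/(\alpha\cdot j)$. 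Hence
\[
\frac{|j|}{|w|}=\frac{1}{\lambda}=\alpha\cdot j.
\]
Finishing the proof is then an elementary bound: using $\alpha_i\ge 0$ and $j_i\ge c$ componentwise yields $\alpha\cdot j=\sum_i\alpha_i j_i\ge c\sum_i\alpha_i=c|\alpha|$, and since $\alpha\in\supp(\phi)$ is a nonzero integer vector (in fact $|\alpha|\ge 2$ as $\phi(\bz)=0$ and $\nabla\phi(\bz)=\bz$), we have $|\alpha|\ge 1$, giving $|j|/|w|\ge c$.

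The main (and only) subtlety is verifying that the extreme point $\alpha$ can be chosen from $\supp(\phi)$, which is legitimate to cite as a standard fact about convex hulls of translated orthants; none of the analytic/nondegeneracy machinery of Lemma \ref{lem1} is needed here. Once that point is fixed, the proposition is really just the observation that $|j|/|w|$ equals the inner product $\alpha\cdot j$ for any vertex $\alpha$ of the supported face, combined with the trivial pointwise bound on that inner product.
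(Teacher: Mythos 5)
Your proposal is correct and follows essentially the same route as the paper: both identify $|j|/|w|$ with $\alpha\cdot j$ for a point $\alpha\in\supp(\phi)\cap H_w$ (using $\alpha\cdot w=1$ and $w\parallel j$) and then bound $\alpha\cdot j\ge c$ from the componentwise hypotheses. The only difference is that you explicitly justify, via extreme points of the supported face, why such an $\alpha$ can be taken in $\supp(\phi)$ --- a point the paper leaves implicit.
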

	\begin{proof}
		Since each component of $j$ is nonzero, indeed there is a normal $w$ to a supporting hyperplane $H_{w}$ of $\cn(\phi)$ that is a constant multiple of $j.$ Let $\alpha \in \supp(\phi)\cap H_{w}.$ Then, since $w/|w|=j/|j|,$ and some component $\alpha_i$ of $\alpha$ is nonzero,
		\eqnn{ \frac{|j|}{|w|} = \alpha \cdot w \frac{|j|}{|w|} =\alpha\cdot j\ge \alpha_i c\ge c.}
	\end{proof}

	\begin{proposition}[Main proposition] Let $P\not\equiv 0$ be a polynomial satisfying $P(\bz)=0.$ In terms of $P$, define the constants $b_m$, $C_m,$ $p$ and $\delta$ as in (\ref{bdef}), (\ref{cdef}), (\ref{pp}), and (\ref{dd}) respectively, for $0\le m\le d$. Fix $j\in\N^d$ with nonnegative components such that $\|2^{-j}\|<(C_{d}/a)^{1/\delta}$. There is a compact face $F_0$ such that $\varep^{\beta}$ is maximized when $\beta\in F_0$ over all $\alpha\in \cn(\phi)$ for $\varep=2^{-j}.$ Moreover, there is a compact face $F'\supseteq F_0$, and $0\le m'\le d$ such that
		
		\begin{itemize}
			\item[(i)] For all $v\in F'$ we have the scaling
			\eqnn{\varep^{v-\beta}=y^{v},\text{ where } y\in [b_{m'}, b_{m'}^{-1}]^d, \text{ and }}
			\item[(ii)] for all $u\in\supp(P)-F'$ we have the upper bound
			\eqnn{\varep^{u}\le \varep^{\beta} C_{m'}/a.} 
		\end{itemize}\label{mainprop}
	\end{proposition}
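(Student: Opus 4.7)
The plan is to build iteratively a strictly nested chain of compact faces $F_0\subsetneq F_1\subsetneq\cdots\subsetneq F_{m'}$ of $\cn(P)$, together with a linearly independent set $\{\beta,\alpha^1,\dotsc,\alpha^{m'}\}\subset\supp(P)$, terminating the moment the decay condition (ii) first holds at level $C_{m'}/a$. Setting $F'=F_{m'}$ then yields the proposition.

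First I would locate $F_0$: the hypothesis $\|\varep\|_\infty<(C_d/a)^{1/\delta}<1$ forces every $j_i$ to be strictly positive, so $\alpha\mapsto\alpha\cdot j$ pairs positively with every direction in the recession cone $\R_\ge^d$ of $\cn(P)$ and therefore attains its minimum on a compact face $F_0$. Choose $\beta\in F_0\cap\supp(P)$; then $\varep^\beta=\max_{\alpha\in\cn(P)}\varep^\alpha$ and $|\beta|_1\ge 1$, since $P(\bz)=0$ forces $\bz\notin\supp(P)$. Inductively, suppose we have a compact $F_m\supseteq F_0$ with a linearly independent spanning set $\{\beta,\alpha^1,\dotsc,\alpha^m\}\subset\supp(P)\cap F_m$ satisfying $\varep^{\alpha^i}\ge\varep^\beta C_m/a$. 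If (ii) holds at level $C_m/a$, stop with $m'=m$; otherwise pick any $\alpha^{m+1}\in\supp(P)\setminus F_m$ with $\varep^{\alpha^{m+1}}>\varep^\beta C_m/a$, and let $F_{m+1}$ be the smallest face of $\cn(P)$ containing $F_m\cup\{\alpha^{m+1}\}$.

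The key step is to show that $F_{m+1}$ remains a compact face, which splits into two contradictions tailored exactly to $\delta=\min(p,p\delta')$. First, $\alpha^{m+1}$ must lie on a compact face: if $\alpha^{m+1}=v+\gamma$ with $v$ on a compact face and $\gamma\ne 0$, then $\|\gamma\|_\infty\ge p$ gives $\alpha^{m+1}\cdot j\ge\beta\cdot j+p\min_i j_i$, so $\varep^{\alpha^{m+1}}\le\varep^\beta\|\varep\|_\infty^p\le\varep^\beta\|\varep\|_\infty^\delta<\varep^\beta C_m/a$ (using $p\ge\delta$ and $C_d\le C_m$), contradicting the choice of $\alpha^{m+1}$. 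Second, $F_m$ and $\alpha^{m+1}$ must share a common facet of $\cn(P)$: in the generic subcase that $\beta$ and $\alpha^{m+1}$ lie on no common facet, the definition of $\delta'$ gives $(\beta+\alpha^{m+1})\cdot w\ge 2(1+\delta')$ for every facet normal $w$, and since $j$ is in the normal cone of $F_0$ we write $j=\sum_k\lambda_k w^k$ with $\lambda_k\ge 0$ ranging over facet normals $w^k$ supporting $F_0$ (so $\beta\cdot w^k=1$ and $\sum_k\lambda_k=\beta\cdot j$), which yields $\alpha^{m+1}\cdot j-\beta\cdot j\ge 2\delta'\beta\cdot j$, hence $\varep^{\alpha^{m+1}}/\varep^\beta\le(\varep^\beta)^{2\delta'}\le\|\varep\|_\infty^{2\delta'}\le\|\varep\|_\infty^\delta<C_m/a$ (using $|\beta|_1\ge 1$ and $2\delta'\ge\delta$), another contradiction. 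The more delicate subcase in which $\beta$ and $\alpha^{m+1}$ do share a facet but $F_m$ does not is handled by the same argument after replacing $\beta$ by a point $\alpha^1\in F_m\cap\supp(P)$ that lies outside the shared facets. In either case, $F_{m+1}$ is contained in a common facet of $\cn(P)$ and is therefore compact.

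Finally, since $F_m=\cn(P)\cap\mathrm{aff}(F_m)$ for faces of a polyhedron and $\alpha^{m+1}\in\cn(P)\setminus F_m$, we have $\alpha^{m+1}\notin\mathrm{aff}(F_m)$, so $\{\beta,\alpha^1,\dotsc,\alpha^{m+1}\}$ is linearly independent with strictly larger affine hull; any extra vectors needed to span $\mathrm{aff}(F_{m+1})$ can be drawn from $\supp(P)\cap F_{m+1}$. Applying Proposition \ref{scaleprop} with $C=C_m/a<1$ produces $y\in[b_{m+1},b_{m+1}^{-1}]^d$ with $y^{\alpha^i}=\varep^{\alpha^i-\beta}$, which extends to $y^v=\varep^{v-\beta}$ for every $v\in\mathrm{aff}(F_{m+1})\supseteq F_{m+1}$. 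Termination within $d$ steps follows from the strict dimension growth combined with the fact that compact faces of $\cn(P)\subset\R^d$ have dimension at most $d-1$. The main obstacle is the compactness step: reconciling the direction $j$ with arbitrary facet normals via the normal cone of $F_0$, and checking that the precise choice $\delta=\min(p,p\delta')$ kills both escape scenarios, requires careful bookkeeping, with Proposition \ref{propq} likely playing an auxiliary role in the special case that $F_0$ is itself a facet and $j$ is parallel to its normal.
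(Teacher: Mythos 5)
Your overall strategy matches the paper's: identify $F_0$ as the face minimizing $\alpha\mapsto\alpha\cdot j$, use the constant $p$ to exclude points off compact faces and $\delta'$ to exclude pairs not sharing a facet, invoke Proposition \ref{scaleprop} with $C=C_m/a$, and terminate by dimension growth. (The paper works with the full level sets $\Lambda_m=\{u:\varep^u>\varep^\beta C_m/a\}$ rather than adjoining one point at a time, and uses the single normal $w$ parallel to $j$ together with Proposition \ref{propq} instead of your normal-fan decomposition $j=\sum_k\lambda_k w^k$; those are organizational differences.) However, two steps do not go through as written.

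First, the ``delicate subcase.'' Your facet argument is anchored on the identity $\beta\cdot w^k=1$ for the facet normals supporting $F_0$, which is what converts $(\beta+\alpha^{m+1})\cdot w^k\ge 2+2\delta'$ into $(\alpha^{m+1}-\beta)\cdot w^k\ge 2\delta'$ and then into $(\alpha^{m+1}-\beta)\cdot j\ge 2\delta'\,\beta\cdot j$. If you replace $\beta$ by some $\alpha^1\in F_m\setminus F_0$, you only know $\alpha^1\cdot w^k\ge 1$, and $(\alpha^1+\alpha^{m+1})\cdot w^k\ge 2+2\delta'$ no longer isolates $\alpha^{m+1}$: it is consistent with $\alpha^1\cdot w^k\ge 1+2\delta'$ and $\alpha^{m+1}\cdot w^k=1$. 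The correct repair (this is what the paper does) keeps $\beta$ as the anchor and concludes only that \emph{one of} the two points $\alpha^1,\alpha^{m+1}$ satisfies $(\,\cdot-\beta)\cdot w\ge\delta'$ and hence violates its largeness hypothesis; since $\alpha^1$ is already in your chain and hence large, the contradiction still obtains, but it is not ``the same argument with $\beta$ replaced.'' (Relatedly, the normalization $\beta\cdot w^k=1$ silently excludes codimension-one faces whose supporting hyperplane passes through the origin; these contribute $0$ to $\beta\cdot j$ and nonnegatively to $(\alpha^{m+1}-\beta)\cdot j$, so the estimate survives, but this needs to be said.) Second, ``$F_{m+1}$ is contained in a common facet of $\cn(P)$ and is therefore compact'' is a non sequitur: facets of a Newton polyhedron are in general unbounded (e.g.\ the ray $\{1\}\times[1,\infty)$ for $\phi=x^2+xy$), so containment in a facet does not yield compactness. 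You do prove separately, via $u=v_u+\gamma_u$ and the constant $p$, that each adjoined point lies on a compact face; what is still missing is the argument that the \emph{minimal face} containing $F_m\cup\{\alpha^{m+1}\}$ is itself compact, which is precisely the role of the paper's claim that the whole set $\Lambda_m$ sits inside a single compact face.
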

	
	\begin{proof}
		First, we claim that $\varep^{\beta}$ is maximized if $\beta\in H_{w}$, where $w$ is a multiple of $j.$ For all $\alpha\in\cn(\phi)-H_{w},$ we know $ \alpha\cdot w >1.$ Therefore,
		\eqnn{\varep^{\alpha} = 2^{- \alpha\cdot j} <2^{-\beta\cdot j} =\varep^{\beta}.}
		Let $F_0=H_{w}\cap \cn(\phi)$ and fix $\beta\in F_0.$ 
		
		If every $u\notin F_0$ satisfies $\varep^{u}\le \varep^{\beta} C_0/a,$ $(ii)$ is trivial and $(i)$ is also clear: let $F'=F_0$ and $y=\bone.$ Otherwise, for $0\le m\le d-1$ define 			
		\eqnn{\Lambda_m=\{u\in\supp(P): \varep^{u}>\varep^{\beta} C_m/a\}.}
		Each $\Lambda_m$ is nonempty because each $\Lambda_m$ contains $\beta$. Let us first show that $\Lambda_{m}$ is contained in a single facet. If some $u^1,u^2\in \Lambda_{m}$ do not lie in the same facet,
		\eqnn{(u^1+ u^2)\cdot w -2 \ge 2\delta'\ge 2\delta/p\ge 2\delta.} 
		Therefore $\delta\le u^i\cdot w-1 =(u^i-\beta)\cdot w$ for some $i.$ So by Proposition \ref{propq} and our assumptions on $j,$
		\eqnn{ \varep^{u^i-\beta}=2^{- (u^i-\beta)\cdot j}\le 2^{-\delta |j|/|w|} <C_d/a<C_m/a.}
		So indeed $\Lambda_{m}$ is contained in a single facet of $\cn(P).$
		
		Next we show that $\Lambda_{m}$ is contained in some compact face $F_{m}$ of $\cn(\phi)$. Otherwise, there is some $u\in \Lambda_{m}$ not lying in any compact face. Write $u=v_{u}+\gamma_{u}$ as in (\ref{p}). Since $v_{u}\in\cn(P),$ by definition of $p\ge \delta$,
		\eqnn{\varep^{u-\beta} = \varep^{v_{u}-\beta+\gamma_{u}}\le \varep^{\gamma_{u}}\le (C_{d}/a)^{dp/\delta}\le  C_{d}/a< C_{m}/a.}
		
		Assume $F_{m}\supset F_0$ containing $\Lambda_{m}$ has maximal codimension, i.e., there is an affine basis $\{v^1,\dotsc, v^{\dim(F_{m})+1}\}\subset F_{m}\cap \Lambda_{m}$ for the affine hull of $\Lambda_{m}.$ By Proposition \ref{scaleprop} with $C=C_m/a$, there is some $y\in [b_{m+1}, b_{m+1}^{-1}]^d$ so that for all $1\le i\le \dim(F_{m})+1$ the equalities $\varep^{v^i-\beta}=y^{v^i}$ hold. The proposition also tells us that $\varep^{v-\beta}=y^{v}$ for all $v\in F_m.$ Since this holds over all $0\le m\le d-1,$ we are left with claim $(ii).$ Assuming there is some $0\le m<d-1$ such that $\Lambda_m=\Lambda_{m+1},$ this claim is obvious by applying Proposition \ref{scaleprop} and letting $m'=m+1$. If there is no such $m,$ notice that the ordered set $\{\dim(F_0), \dim(F_1),...\}$ is strictly increasing and bounded strictly above by $d,$ so in particular $\dim(F_m)\ge m$ for all $0\le m\le d-1.$ In this case we see that $\dim(F_{d-1})=d-1.$ Therefore $m'=d$ satisfies property $(i)$; property $(ii)$ is obvious by definition of $\delta'$ and the bounds assumed on $\varep=2^{-j}.$
	\end{proof}
	
	Now Proposition \ref{mainprop} is applied to finish proving the main lemma. Write $\phi(\varep)$ as
	\aln{& \sum_{\alpha\in F_{m'}} c_{\alpha}' \varep^{\alpha}+\sum_{\alpha\notin F_{m'}} c_{\alpha}' \varep^{\alpha} + \sum_{|\alpha|=k}h_{\alpha}'(\varep)\varep^{\alpha}
		\nonumber\\  
		& =\sum_{\alpha\in F_{m'}}c_{\alpha}' \varep^{\alpha}+\sum_{\alpha\notin F_{m'}}c_{\alpha}' \varep^{\alpha}+\sum_{|\alpha|=k}h_{\alpha}'(\varep)\varep^{\alpha}\nonumber\\
		& \stackrel{\text{Prop. \ref{scaleprop}}}{=}\Bigg{(}\sum_{\alpha\in F_{m'}}c_{\alpha}' y^{\alpha} +\sum_{\alpha\notin F_{m'}}c_{\alpha}' \varep^{\alpha-\beta}+\sum_{|\alpha|=k}h_{\alpha}'(\varep)\varep^{\alpha-\beta} \Bigg{)}\varep^\beta.\label{sums}}
	For the leftmost sum of (\ref{sums}), we choose $1\le j\le d$ such that by nondegeneracy,
	\eqn{\Bigg{|}\sum_{\alpha\in F_{m'}}c_{\alpha}' y^{\alpha}\Bigg{|}\ge C_{m'}',\label{Fbound}}
	which is possible because $y\in [b_{m'}, b_{m'}^{-1}]$. For the second term, Proposition \ref{mainprop} guarantees $\varep^{\alpha-\beta}\le C_{m'}/a$ for monomials $\alpha\notin F_{m'}$ appearing in $P_k$. Also, by the definition of $a,$ we know
	\eqn{\sum_{\alpha\notin F_{m'}}|c_{\alpha}'|\le a/2.\label{nFbound}}
	To estimate the remainder term, let $\zeta>0$ be small enough so that for all $\|\varep\| <\zeta$ and all $1\le j\le d$ the remainders $h_\alpha'=h_{\alpha,j}$ can be bounded by
	\eqn{ \sum_{|\alpha|=k} |h_{\alpha}'(\varep)| \le \frac{C_{d}}{4}.\label{Rbd}}
	This is possible since each $h_{\alpha}'$ goes to $0$ as $\varep\to \bz.$ Now bound 
	\eqn{\Bigg{|}\sum_{\alpha\in F_{m'}}c_{\alpha}' y^{\alpha}+\sum_{\alpha\notin F_{m'}}c_{\alpha}' \varep^{\alpha-\beta}+\sum_{|\alpha|=k}h_{\alpha}'(\varep)\mathbf \varep^{\alpha-\beta}\Bigg{|}.\label{tequal}}
	simply by applying the triangle inequality. Here we must use that all monomials $\varep^{\alpha-\beta}$ appearing in the remainder are such that $\alpha\in \cn(\phi)$. This assumption is necessary to guarantee $\alpha$ does not lie below the supporting hyperplane $H_w\ni\beta$ of $\cn(\phi)$: $\alpha\cdot w\ge 1$ implies $\varep^{\alpha}\le \varep^\beta$. Therefore, by (\ref{Fbound}), (\ref{nFbound}), and (\ref{Rbd}), the quantity (\ref{tequal}) is bounded below by
	\alnn{& \Bigg{|}\sum_{\alpha\in F_{m'}}c_{\alpha}' y^{\alpha}\Bigg{|}- \frac{C_{m'}}{a}\cdot \sum_{\alpha\notin F_{m'}} |c_{\alpha}'| - \sum_{|\alpha|=k} |h_{\alpha}'(\varep)|\\
		& \ge C_{m'}' - \frac{C_{m'}}{a}\cdot \frac{a}{2}-  \frac{C_{d}}{4} =C_{m'}'-\frac{C_{m'}}{2}- \frac{C_{d}}{4}\\
		& \ge\frac{C_d}{4},}
	where the last inequality follows from $C_{d}< C_{m'}\le C_{m'}'.$ Therefore for dyadic $\varep=2^{-j},$ we have $\|\varep\nabla\phi(\varep)\|\ge \varep^\beta C_d/4.$
	
	We summarize for all future (and past) use that $x$ small enough means for all $1\le i\le d,$
	\eqn{0<x_i<s := \min\{(C_{d}/a)^{1/\delta}, \zeta\}.\label{smallenough}}
	To finish the proof of Lemma \ref{lem1}, the result above needs to be extended to all real $x$ small enough. The first step is to observe that nowhere in Propositions \ref{propq} and \ref{mainprop} did we use that $j\in\N^d;$ we just required that each component of $j$ is large enough $(\varep=2^{-j}$ small enough). Finally, for all $x\in [2^{-j}, 2^{-j+\btwo}],$ there is a uniform constant depending on the extreme points of $\cn(\phi)$ but independent of $j$ such that $x^\beta\gtrsim \varep^\beta$ for all $\beta.$
	
	\section{Proof of Lemma \ref{thm1} \label{varchestimate}}
	\subsection{Estimating an integration by parts operator}
	Let $\phi$ be $k-$nondegenerate or analytic nondegenerate on $[-1,1]^d$ and let $\eta\in C^{k}$ be supported in $[1,4]^d.$ The goal of this section is to integrate 
	\eqnn{I_+(\lambda, \varep)=\int_{[1, 4]^d}e^{i\lambda\phi(\varep x)}\eta(x)dx}
	by parts $k$ times in order to get good estimates on $I_+(\lambda,\varep)$. 
	
	Define $f(x)=\frac{\nabla\phi(x)}{\|\nabla\phi(x)\|^2}$. Since $\nabla\phi(x)\neq\bz$ away from coordinate axes, $f(x)$ is $C^{k-1}$ in each component away from coordinate axes. Define the operator $D=D_{\varep,\phi}$ by 
	\eqnn{D(g)(x)=\frac{\nabla g(x)\cdot f(\varep x)}{i\lambda}.}
	We can check that $D$ fixes $e^{i\lambda\phi(\varep x)}$. If $g$ is $C^k,$ we can estimate $(D^t)^N(g)(x)$ for $1\le N\le k-1$, where the adjoint $D^t$ of $D$ is given by the divergence
	\eqnn{D^t(g)(x)=-\nabla\cdot\frac{g(x)f(\varep x)}{i\lambda}.}
	To estimate $(D^t)^N(g)$ we consider the components $f_n$ of $f.$ The goal is to show $\partial^{\beta}f_n$ is a linear combination of terms of the form
	\eqn{\frac{\partial^{\gamma^1}\phi\cdots \partial^{\gamma^{2^r-1}}\phi}{\|\nabla\phi\|^{2^r}}=\frac{\partial^{\gamma^1}\phi}{\|\nabla\phi\|}\cdots\frac{\partial^{\gamma^{2^r-1}}\phi}{\|\nabla\phi\|}\|\nabla\phi\|^{-1},\label{formulaf}}
	where $0\le |\gamma^\ell|\le N$ and $r=|\beta|$. Assuming this is true, we first show that
	\eqn{|\partial^\beta f_n(\varep x)|\lesssim \varep^{-\alpha}\label{varepbound}}
	for any $\beta\in\N^d$ and any $\alpha\in \cn(\phi),$ for $\varep$ small enough: (\ref{formulaf}) guarantees $\partial^\beta f_n(\varep x)$ is a linear combination of products of $2^r-1$ terms 
	\eqn{\varep^{\gamma^\ell}\partial^{\gamma^\ell}\phi(\varep x)\|\varep\nabla\phi(\varep x)\|^{-1}\label{starrr}} 
	times a single $\|\varep\nabla\phi(\varep x)\|^{-1}$. We claim the first $2^r-1$ terms (\ref{starrr}) can be bounded above by a constant independent of $\varep$, while  $\|\varep\nabla\phi(\varep x)\|^{-1}$ is bounded above by $\varep^{-\alpha}$ for any $\alpha\in \cn(\phi)$ by Lemma \ref{lem1}. Indeed, if $x\in[1,4]^d,$ then the absolute value of $\varep^{\gamma^\ell}\partial^{\gamma^\ell}\phi(\varep x)=x^{-\gamma^\ell}(\varep x)^{\gamma^\ell}\partial^{\gamma^\ell}\phi(\varep x)$ is bounded above by a uniform constant depending on $\phi$ times $\varep^v$ for some $\alpha\in \cn(\phi)$ depending on $x$ and $\varep$ (consider the Taylor expansion). Lemma \ref{lem1} guarantees each term (\ref{starrr}) is indeed bounded above by a constant independent of $\varep$ since $\|\varep\nabla\phi(\varep x)\|^{-1}\le \|\varep x\nabla\phi(\varep x)\|^{-1}\lesssim \varep^{-\alpha}$.
	
	We now begin the proof that $\partial^\beta f_n$ is a linear combination of terms of the form (\ref{formulaf}). The proof is by induction on $|\beta|$. The base case is clear by the discussion above. Taking the partial derivative of $\|\nabla\phi\|^{2^r},$ with respect to $x_j,$
	\eqnn{\partial_j\|\nabla\phi\|^{2^r}=2^{r}\|\nabla\phi\|^{2^{r}-2}\sum_{\ell=1}^d\phi_{x_\ell}\phi_{x_\ell x_j},}
	which is a sum of products of $(2^r-2)+2=2^r$ functions, each equal to some partial derivative of $\phi$ of order no more than 2. Writing $\beta=\gamma^1+\cdots +\gamma^{2^r-1},$ the function
	\eqnn{\partial_j\sum_{\beta}a_\beta \partial^{\gamma^1}\phi\cdots \partial^{\gamma^{2^r-1}}\phi =\sum_{\beta}\sum_{l=1}^{2^r-1}a_\beta\partial^{\gamma^1}\phi\cdots\partial^{\gamma^l+\bee_j}\phi\cdots\partial^{\gamma^{2^r-1}}\phi}
	is again a sum of products of $2^r-1$ functions, each equal to some partial derivative of $\phi$ of order at most one more than $|\beta|.$ Therefore the numerator of
	\eqnn{\partial_j\frac{\sum_{\beta} \alpha_\beta\partial^{\gamma^1}\phi\cdots \partial^{\gamma^{2^{r}-1}}\phi}{\|\nabla \phi\|^{2^r}}}
	is equal to 
	\eqnn{
	\begin{split}
	& \|\nabla\phi\|^{2^r}\sum_{\beta}  \sum_{l=1}^{2^r-1} a_\beta\partial^{\gamma^1}\phi\cdots \partial^{\gamma^l+\bee_j}\phi\cdots \partial^{\gamma^{2^r-1}}\phi\\
	& \hspace{3cm}-\sum_{\beta} a_\beta\partial^{\gamma^1}\phi\cdots \partial^{\gamma^{2^r-1}}\phi\cdot2^{r}\|\nabla\phi\|^{2^{r}-2}\sum_{\ell=1}^d\phi_{x_\ell}\phi_{x_\ell x_j}.
	\end{split}} 
	After reorganizing, we see that we get a sum of products of $2^r+2^r-1=2^{r+1}-1$ functions, each equal to some partial derivative of $\phi$. What's left is the denominator of the partial derivative in the $j$ direction: $\|\nabla\phi\|^{2^{r+1}}.$ So by induction, the proof of (\ref{varepbound}) is complete: let $|\beta|=r>0$ above and write $\beta=\beta'+\bee_j$ for any $j$ such that $\beta_j\neq 0$.
	
	Next, induction can be used to compute that for $\beta^0,\dotsc, \beta^N\in\R^d$ there are constants $a_\beta=a_{\beta^0,\dotsc, \beta^N}\in\{0,1\}$ such that 
	\eqnn{(D^t)^N(g)(x)= (i\lambda)^{-N}\sum_{\substack{1\le j_1,\dotsc, j_N\le d\\|\beta^0+\beta^1+\cdots +\beta^N|=N}} a_{\beta}\partial^{\beta^0}g(x)(\partial^{\beta^1}f_{j_1})(\varep x)\cdots (\partial^{\beta^N}f_{j_N})(\varep x).}
	By (\ref{varepbound}),
	\alnn{& |(D^t)^N(g)(x)|\le \lambda^{-N} \sum_{\substack{1\le j_1,\dotsc, j_N\le d\\|\beta^0+\beta^1+\cdots +\beta^N|=N}} a_{\beta}|\partial^{\beta^0}g(x)|\cdot |(\partial^{\beta^1}f_{i_1})(\varep x)|\cdots |(\partial^{\beta^N}f_{i_N})(\varep x)|\nonumber\\
		& \lesssim \lambda^{-N}\sum_{j_1,\dotsc, j_N=1}^d|\partial^{\beta^0}g(x)|\varep^{-\alpha^{1}}\cdots\varep^{-\alpha^{N}}\label{asdf}}
	for any $\alpha^1,\cdots, \alpha^N\in \cn(\phi).$ In particular, for all $\alpha\in \cn(\phi),$
	\eqn{|(D^t)^N(g)(x)|\lesssim \lambda^{-N}\max_{1\le |\beta^0|\le N}|\partial^{\beta^0} g(x)|\varep^{-N\alpha}\label{dtest}}
	for all $0\le N\le k-1,$ where the implicit constant is independent of $\varep$ and $\lambda.$ 
	
	\subsection{Final estimate for Lemma \ref{thm1}}
	We now put everything together for $\varep$ small enough: writing $(x_1/\varep_1,\dots, x_d/\varep_d)=x/\varep,$
	\alnn{I_+(\lambda,\varep)& =\int_{[\varep,  4\varep]} e^{i\lambda \phi(x)}x^\beta\eta(x/\varep)dx = \varep^\bone \int_{[1,4]^d} e^{i\lambda \phi(\varep x)} (\varep x)^\beta \eta(x)dx\\
	& = \varep^{\beta+\bone}\int_{[1,4]^d} D^N(e^{i\lambda \phi(\varep x)})x^\beta\eta(x)dx = \varep^{\beta+\bone} \int_{[1,4]^d} e^{i\lambda \phi(\varep x)} (D^t)^N(x^\beta\eta(x))dx.}
	By (\ref{dtest}), letting $g(x)=x^\beta\eta(x)\in C^k$ in (\ref{dtest}),
	\eqnn{\int_{[1,4]^d} | (D^t)^N(x^\beta\eta(x))|dx \lesssim \int_{[1,4]^d}\lambda^{-N} \varep^{-N\alpha}dx \lesssim \lambda^{-N} \varep^{-N\alpha}.} 
	Therefore we have proved Lemma \ref{thm1}: for all $0\le N\le k-1$,
	\eqnn{|I_+(\lambda,\varep)|\lesssim\lambda^{-N} \varep^{-N\alpha+\beta+\bone}.}

	\section{Proof of Theorem \ref{wutcor}}	
	We now use Lemma \ref{thm1} and optimization to prove Varchenko's upper bounds. Since we are summing boxes away from coordinate hyperplanes, without loss of generality we can bound over each orthant of $\R^d,$ so only the orthant $\R_\ge^d$ is considered. To obtain a bound on the integral
	\eqnn{I_{+}(\lambda)=\int_{\R_\ge^d} e^{i\lambda \phi(x)} \psi(x)dx=\int_{[0,1]^d} e^{i\lambda \phi(x)} \psi(x)dx, }
	 where $\psi$ is supported in a sufficiently small neighborhood of the origin, a sum is taken over all positive dyadic boxes. To accomplish this, decompose $\psi(x)=\sum_{j_1,\dotsc, j_d=0}^\infty \psi(x)f_{j}(x),$ where $f_j(x)=f(2^{j}x)$ is a partition of unity subordinate to the cover $\{(2^{-j}, 2^{-j+\mathbf{2}})\}_{j\in\N^d}$ of $(0,4)^d$. For existence of such a dyadic partition of unity, see for example Grafakos's textbook\cite{grafakos14}.\footnote{Instead of breaking the integral into orthants, we can consider a multi-dyadic partition of unity similar to Gressman's bi-dyadic partition in \cite{pgs16}. Since Varchenko's bounds are sharp for $t>1,$ and we recover this sharp bound in just orthants, it is an interesting question whether the bounds proven here are sharpest possible over orthants for all $t$.}
	
\subsection{Optimization}\label{LP}
Let $\beta\in \N^d.$ There exist a unique integer $1\le r\le d$ and a unique rational number $\eta >0$ such that $r-1$ is the smallest dimension over all faces of $\cn(\phi)$ containing the vector $v:=\eta(\beta+\bone)$. We show
\eqnn{\sum_{j_1,\dotsc, j_d=0}^\infty \min_{\substack{N\in \N, \\\alpha\in \cn(\phi)}}\{\lambda^{-N}2^{(N\alpha-\beta-\bone)\cdot j}\}\lesssim \lambda^{-1/\eta}\log^{d-r}(\lambda).}
First, considering $N=0,$ we see that for any $1\le i\le d$,
\eqnn{\sum_{j_1=0}^\infty\cdots\sum_{j_i=\log(\lambda)/v_i}^\infty\cdots \sum_{j_d=0}^\infty 2^{-(\beta+\bone)\cdot j}\lesssim \lambda^{-(\beta_i+1)/v_i}=\lambda^{-1/\eta}.}
So it is enough to bound the sum
\eqn{\sum_{j_1=0}^{\log(\lambda)/v_1}\cdots\sum_{j_d=0}^{\log(\lambda)/v_d}  \min_{\substack{N\in \N, \\\alpha\in \cn(\phi)}}\{\lambda^{-N}2^{ (N\alpha-\beta-\bone)\cdot j}\}\label{varsum}}
above by a uniform constant times $\lambda^{-1/\eta}\log^{d-r}(\lambda).$ It is more natural to work in a continuous setting for this problem, so we bound (\ref{varsum}) by a uniform constant times
\eqn{\int_{0}^{\log(\lambda)/v_1}\dotsi \int_{0}^{\log(\lambda)/v_d}  \min_{\substack{N\in \N, \\\alpha\in \cn(\phi)}}\{\lambda^{-N}e^{  (N\alpha -\beta-\bone)\cdot x}\} dx, \label{varint}}
and estimate the integral (\ref{varint}). Since $v$ does not lie in a face of dimensions less than $r-1$, and all components of $v$ are positive, there are linearly independent $\alpha^1,\dotsc, \alpha^r\in F$ whose convex hull contains $v,$ so write 
\eqn{v= \sum_{i=1}^r \lambda_i\alpha^i.\label{convcomb}}
For $1\le i\le r$ let $\theta_i=\lambda_i/(N\eta)$ and $\theta_0= 1-1/(N\eta)$. All $\theta_i$ are positive and their sum is $1$ by the restriction placed on $\eta$, assuming $N>1/\eta$. Moreover,
\eqn{\theta_0(-\beta-\bone) +\sum_{i=1}^r \theta_i (N\alpha^i-\beta-\bone)=\bz.\label{convo}}
The integral (\ref{varint}) can be bounded above by 
\eqn{\int_{0}^{\log(\lambda)/v_1}\dotsi \int_{0}^{\log(\lambda)/v_d} \min_{1\le i\le r} \{e^{-(\beta+\bone)\cdot x}, \lambda^{-N}e^{ (N\alpha^i -\beta-\bone)\cdot x}\} dx.\label{varint2}}
Now change variables: let $x=A^Ty$ where $A$ is the $d\times d$ matrix defined by 
\begin{equation*}
\begin{split}
	A\alpha^i & = \bee_i \text { for } 1\le i \le r,\\
	A \bee_i & = \bee_i \text{ for } r< i\le d.
\end{split}
\end{equation*}
Up to a constant depending on $A	,$ the integral (\ref{varint2}) equals
\eqn{\int_{\substack{ 0\le A^T y\cdot\bee_i\le \log(\lambda)/v_i\\ 1\le i\le d}} \min_{1\le i\le r}\{e^{ -A(\beta+\bone)\cdot y}, \lambda^{-N} e^{A(N\alpha^i-\beta-\bone)\cdot y}\} dy.\label{varint3}}
By (\ref{convcomb}), since $v=\eta(\beta+\bone)$, the vector $A(\beta+\bone)$ can be written as a linear combination of vectors $A\alpha^i=\bee_i$ over $1\le i\le r$. Therefore we can integrate over directions $r<i\le d$ to get
\eqnn{\int_{\substack{ 0\le  A^T y\cdot\bee_i\le \log(\lambda)/v_i\\ r<i\le d}} dy_{r+1}\dotsm dy_d \lesssim \log^{d-r}(\lambda),}
and bound (\ref{varint3}) above by 
\eqn{\log^{d-r}(\lambda) \int_{\R^k} \Big{|}\min_{1\le i\le r}\{e^{ -A(\beta+\bone)\cdot y}, \lambda^{-N} e^{ A(N\alpha^i-\beta-\bone)\cdot y}\}\Big{|} dy_1\dotsm dy_r.\label{varint4}}
Since $A\alpha^i=\bee_i$ and $\sum_{i=1}^r \lambda_i =1,$ there holds
\eqnn{ A(\beta+\bone) \cdot\log(\lambda)\bone = \frac{1}{\eta} \log(\lambda)\sum_{i=1}^r \lambda_i  A\alpha^i\cdot \bone = \frac{1}{\eta} \log(\lambda),}
so $e^{ -A(\beta+\bone) \cdot\log(\lambda)\bone}=\lambda^{-1/\eta}.$ Therefore change variables $y\mapsto y+\log(\lambda)\bone$. This produces a factor $\lambda^{-(\beta+\bone)\cdot w}$, so (\ref{varint4}) is bounded above by $\lambda^{-1/\eta} \log^{d-r}(\lambda)$ times
\eqnn{\int_{\R^k} \Big{|}\min_{1\le i\le r}\{e^{-A(\beta+\bone)\cdot y}, e^{ A(N\alpha^i-\beta-\bone) \cdot y}\}\Big{|} dy_1\dotsm dy_r.}
By (\ref{convo}), 
\eqnn{\mathbf{0} = -\theta_0A(\beta+\bone)+\sum_{i=1}^r \theta_i A(N\alpha^i-\beta-\bone).}
Since $A(N\alpha^i-\beta-\bone)$ for $1\le i\le r$ are linearly independent, 
\eqnn{\sup_{\|y\|_2=1}\min_{1\le i\le r} \{ -A(\beta+\bone)\cdot y, A(N\alpha^i-\beta-\bone)\cdot y\}<0.}
By homogeneity, there is a constant $c=c(\alpha^1,\dotsc, \alpha^k, \beta)>0$ such that
\eqnn{\min_{1\le i\le r} \{-A(\beta+\bone) \cdot y, A(N\alpha^i-\beta-\bone) \cdot y\}<c \|y\|_2.}
After a polar change of variables, we can bound (\ref{varint4}) by a constant times
\eqnn{\lambda^{-1/\eta}\log^{d-r}(\lambda) \int_0^\infty e^{-cr}dr \lesssim \lambda^{-1/\eta}\log^{d-r}(\lambda),}
which is exactly the bound we were looking for.

\subsection{Varchenko's upper bounds as a special case}
Taking $\beta=\bz$ above, observe $\eta= t$. Therefore Theorem \ref{wutcor} generalizes Varchenko's upper bounds in \cite{varchenko76}. However, $C^k$ phases require $k-1>1/t$ in Lemma \ref{thm1} since we used $N>1/t$ above. Since $t\ge 1/d$ is always true for polynomials that vanish at the origin, Varchenko's estimate also holds for all $(d+2)-$nondegenerate phases, and in particular for a class of phases that are merely $C^{d+2}$.
	
\section{Preliminaries for the proof of Theorem \ref{thm2}}
The goal of this section is to prove the asymptotic expansion of the oscillatory integrals under consideration. 

The following properties are required to prove the asymptotic expansion of $I(\lambda)$. All minima $\lf\cdot\rf$ are with respect to $\cn(\phi).$ Properties $(i),(ii)$ can be proven by simply using the definitions of $\lf\cdot\rf$ and $\mathbf{n}(\cdot)$. For $(iii),$ we require that any collection of more than $d-1$ normal vectors $\{w\}$ is such that $\cap_w H_w$ contains at most one point. The last item is a fact about codimension corresponding to the number of facets containing a vector.

\begin{proposition}\label{asymprop:p}
	Let $\alpha, \beta\in \N^d$ be nonzero. Then the following hold:
	\begin{enumerate}[label=(\roman*), ref=\thetheorem(\roman*)]
		\item $\lf c\alpha\rf = c\lf \alpha\rf$ for any $c>0.$\label{asymprop:p:1}
		\item $\lf \alpha +\beta\rf \ge \lf\alpha\rf +\lf\beta\rf$.	\label{asymprop:p:2}
		\item If $\lf\alpha+\beta\rf=\lf\alpha\rf+\lf\beta\rf$, then $\mathbf{n}(\alpha+\beta)=\mathbf{n}(\alpha)\cap \mathbf{n}(\beta).$ Moreover, in this case $\mathbf{n}(\alpha)\neq \mathbf{n}(\beta)$ implies  $|\mathbf{n}(\alpha+\beta)|<\min\{d, |\mathbf{n}(\alpha)|\}.$\label{asymprop:p:3}
		\item $\min\{d,|\bn(\beta+\bone)|\}$ is the greatest codimension over all faces containing $\beta+\bone.$\label{asymprop:p:4}
	\end{enumerate}
\end{proposition}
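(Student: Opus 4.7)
The plan is to handle the four parts sequentially, with (i) and (ii) being one-line computations and (iii), (iv) carrying the substantive content.

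For (i) I would just use linearity of the pairing: $\lf c\alpha\rf = \min_{w\in W}(c\alpha\cdot w) = c\min_{w\in W}\alpha\cdot w = c\lf\alpha\rf$ for $c>0$. For (ii), for each fixed $w\in W$ write $(\alpha+\beta)\cdot w = \alpha\cdot w + \beta\cdot w \ge \lf\alpha\rf + \lf\beta\rf$, and minimize the left side over $w$.

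For (iii), I would establish the set equality $\bn(\alpha+\beta)=\bn(\alpha)\cap\bn(\beta)$ by double inclusion. If $w\in\bn(\alpha)\cap\bn(\beta)$, then $w\cdot(\alpha+\beta)=\lf\alpha\rf+\lf\beta\rf$, which equals $\lf\alpha+\beta\rf$ by hypothesis, so $w\in\bn(\alpha+\beta)$. Conversely, if $w\in\bn(\alpha+\beta)$, the chain $\lf\alpha+\beta\rf = w\cdot\alpha + w\cdot\beta \ge \lf\alpha\rf+\lf\beta\rf = \lf\alpha+\beta\rf$ forces equality in each summand. For the ``moreover'' part, $\bn(\alpha)\neq\bn(\beta)$ makes $\bn(\alpha)\cap\bn(\beta)$ a proper subset of $\bn(\alpha)$, giving $|\bn(\alpha+\beta)|<|\bn(\alpha)|$. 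To squeeze out the bound by $d$, I would argue by contradiction: if $|\bn(\alpha+\beta)|\ge d$, the stated hypothesis forces $\bigcap_{w\in\bn(\alpha+\beta)}H_w$ to consist of at most one point. Since $\bn(\alpha+\beta)\subseteq\bn(\alpha)\cap\bn(\beta)$, both $\alpha/\lf\alpha\rf$ and $(\alpha+\beta)/\lf\alpha+\beta\rf$ lie in that intersection, hence they agree. Equating them makes $\alpha$ and $\beta$ positive scalar multiples of each other, which then forces $\bn(\alpha)=\bn(\beta)$, contradicting the hypothesis.

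For (iv), let $p=(\beta+\bone)/\lf\beta+\bone\rf$ and let $F^{*}$ be the smallest face of $\cn(\phi)$ containing $p$. Since $F^{*}=\bigcap_{w\in\bn(\beta+\bone)} F_w$, standard polyhedral geometry gives $\mathrm{codim}(F^{*}) = \mathrm{rank}\{w:w\in\bn(\beta+\bone)\}$, which is automatically at most $\min\{d,|\bn(\beta+\bone)|\}$. The principal obstacle is achieving equality. When $|\bn(\beta+\bone)|\ge d$, I would reuse the hypothesis from (iii): $\bigcap_{w}H_w$ contains at most one point, so must equal $\{p\}$, forcing the normals to span $\R^d$ and giving rank $d$. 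When $|\bn(\beta+\bone)|<d$, I would argue that the facet normals through $p$ must be linearly independent by minimality of $F^{*}$: a linear dependence among them would inflate $\bigcap_w H_w$ to strictly greater than codimension $|\bn(\beta+\bone)|$, producing a face containing $p$ with strictly smaller dimension than $F^{*}$, contradicting that $F^{*}$ was chosen minimal. This transversality step is the most delicate point and leans on the combinatorial structure of the Newton polyhedron; everything before it is straightforward bookkeeping with $\lf\cdot\rf$ and $\bn(\cdot)$.
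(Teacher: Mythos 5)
Your handling of (i), (ii), the set identity $\bn(\alpha+\beta)=\bn(\alpha)\cap\bn(\beta)$, and the bound $|\bn(\alpha+\beta)|<d$ is correct and follows the same route the paper sketches (the paper offers no details beyond asserting (i)--(ii) follow from the definitions and invoking, for (iii), exactly the fact you use: more than $d-1$ of the normals have $\bigcap_w H_w$ equal to at most one point). However, the remaining half of the ``moreover'' clause has a genuine gap: you deduce $|\bn(\alpha+\beta)|<|\bn(\alpha)|$ from the claim that $\bn(\alpha)\neq\bn(\beta)$ makes $\bn(\alpha)\cap\bn(\beta)$ a \emph{proper} subset of $\bn(\alpha)$. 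That implication is false whenever $\bn(\alpha)\subsetneq\bn(\beta)$, in which case $\bn(\alpha+\beta)=\bn(\alpha)$ and the strict inequality fails. This is not a repairable slip in the argument alone: for $\phi=x^2+xy+y^3$ the facet normals are $w^1=(1/2,1/2)$ and $w^2=(2/3,1/3)$, and taking $\alpha=(2,0)$, $\beta=(1,1)$ gives $\lf\alpha+\beta\rf=\lf\alpha\rf+\lf\beta\rf=2$ and $\bn(\alpha)=\{w^1\}\neq\{w^1,w^2\}=\bn(\beta)$, yet $|\bn(\alpha+\beta)|=1=\min\{d,|\bn(\alpha)|\}$. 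To get the strict drop one needs the stronger hypothesis $\bn(\alpha)\not\subseteq\bn(\beta)$ (which is what is actually available where the paper applies this clause, since there $\alpha\notin F$); your proof as written does not isolate this.

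The proof of (iv) also has a gap at exactly the step you flag as delicate. You need the facet normals in $\bn(\beta+\bone)$ to be linearly independent when there are fewer than $d$ of them, but the justification offered is backwards: a linear dependence among the normals makes $\bigcap_w H_w$ \emph{larger} (of smaller codimension), so it still contains $F^*$ and produces no face of smaller dimension, hence no contradiction with minimality of $F^*$. The independence claim is precisely the content of (iv) --- it is what upgrades $\operatorname{codim}F^*\le|\bn(\beta+\bone)|$ to equality --- and it is false for general polyhedra (a codimension-$3$ face of a $5$-dimensional prism over a square pyramid lies on exactly four facets with dependent normals), so it must be extracted from the special structure of $\cn(\phi)$ as a Minkowski sum with $\R_\ge^d$ and from the fact that $\beta+\bone$ has strictly positive coordinates, not from minimality of $F^*$. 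As written, this step is asserted rather than proved; the paper itself does no better (``the last item is a fact about codimension''), but since you are supplying the missing proof, the circularity here needs to be resolved.
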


\subsection{A differential inequality}
We need to show
\eqn{I(\lambda)\sim \sum_{j=0}^n\sum_{r=0}^{d_j-1}a_{j,r}(\psi)\lambda^{-p_j}\log^{d_j-1-r}(\lambda),\label{expansion2}}
 for all $n\in\N$ large enough, as stated in Theorem \ref{thm2} (and will be restated below), and $\lambda>2$ where $p_0<p_1<\cdots$ is the ordering of $\mathcal{E}$. Note that $p_j$ and $d_j$ depend only on the Newton polyhedron of $\phi.$ 
 
 The main goal is to show $I(\lambda)$ satisfies the differential inequality of Lemma \ref{diflem} below. The outline of the rest of the paper is as follows. We first show how this lemma implies Theorem \ref{thm2}. Then it is shown that $I(\lambda)$ satisfies a differential inequality satisfying the hypothesis of the following lemma. 

\begin{lemma}
	Assume $f: (2,\infty)\to\C$ is smooth. Assume there are real numbers $0<p_0<p_1<\cdots< p_{n+1}$ and positive integers $d_0,\cdots, d_{n+1}$ such that
	\eqnn{\Bigg{|}\Bigg{(}\lambda\frac{d}{d\lambda}+p_n\Bigg{)}^{d_n}\cdots \Bigg{(}\lambda\frac{d}{d\lambda}+p_0\Bigg{)}^{d_0} f(\lambda)\Bigg{|}\lesssim \lambda^{-p_{n+1}}\log^{d_{n+1}-1}(\lambda).\label{difine}}
	Then, there are constants $a_{j,k}\in\C$ such that
	\eqnn{f(\lambda)=\sum_{j=0}^n\sum_{k=0}^{d_n-1}a_{j,k}\lambda^{-p_j}\log^{d_j-1-k}(\lambda) + O\Big(\lambda^{-p_{n+1}}\log^{d_{n+1}-1}(\lambda)\Big).}
	\label{diflem}
\end{lemma}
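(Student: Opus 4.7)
The plan is to make the substitution $s=\log\lambda$ to convert into a constant-coefficient setting, under which $\lddl$ becomes $\frac{d}{ds}$. Setting $g(s):=f(e^s)$, the expressions $\lambda^{-p_j}\log^r\lambda$ become $s^r e^{-p_j s}$, and the hypothesis reads $|Lg(s)|\lesssim e^{-p_{n+1}s}s^{d_{n+1}-1}$ where $L:=\prod_{j=0}^n(\tfrac{d}{ds}+p_j)^{d_j}$. The target conclusion becomes the existence of constants $a_{j,k}\in\C$ with
\[
g(s) = \sum_{j=0}^n\sum_{k=0}^{d_j-1}a_{j,k}\,s^{d_j-1-k}e^{-p_j s}+O\bigl(e^{-p_{n+1}s}s^{d_{n+1}-1}\bigr).
\]

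Next I would exhibit an explicit right-inverse for each factor $\tfrac{d}{ds}+p$ by integrating from infinity:
\[
(\tfrac{d}{ds}+p)^{-1}v(s):=-\int_s^\infty e^{-p(s-\tau)}v(\tau)\,d\tau,
\]
well-defined whenever $v$ decays faster than $e^{-ps}$ (a direct differentiation confirms it solves $(\tfrac{d}{ds}+p)h=v$). A direct estimate (substitute $\sigma=\tau-s$, expand $(s+\sigma)^m$ by the binomial theorem, and integrate the resulting absolutely convergent integrals) shows $|v(s)|\lesssim e^{-qs}s^m$ with $q>p$ implies $|(\tfrac{d}{ds}+p)^{-1}v(s)|\lesssim e^{-qs}s^m$; i.e., non-resonant inversion preserves the decay profile. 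Because the gap assumption forces $p_{n+1}>p_j$ for every $j$, iterating through the $\sum_j d_j$ single-factor inverses comprising a full $L^{-1}$ stays non-resonant, so the particular solution $g_p:=L^{-1}(Lg)$ obtained by composing them satisfies $|g_p(s)|\lesssim e^{-p_{n+1}s}s^{d_{n+1}-1}$.

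Finally, since $Lg_p=Lg$ by construction, we have $g-g_p\in\ker L$. Standard constant-coefficient ODE theory together with the distinctness of the exponents $p_j$ gives
\[
\ker L=\Bigl\{\sum_{j=0}^n P_j(s)e^{-p_j s}:\deg P_j\le d_j-1\Bigr\},
\]
so writing $g-g_p$ in this form, unwinding $s=\log\lambda$, and relabeling the coefficients of $P_j(\log\lambda)=\sum_k a_{j,k}\log^{d_j-1-k}\lambda$ yields the claimed asymptotic expansion for $f$. The one delicate point is the decay-preservation estimate for the single-factor inverse; once that is established, the iterated bound follows automatically from the strict inequality $p_{n+1}>\max_j p_j$, and the identification of $\ker L$ is classical. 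An alternative packaging is induction on the total order $\sum_j d_j$, peeling off one factor at a time, which avoids having to set up the full $L^{-1}$ at once but requires slightly more bookkeeping of the intermediate kernel contributions.
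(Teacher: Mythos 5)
Your proof is correct and follows essentially the same route as the paper: your decay-preserving single-factor inverse $(\tfrac{d}{ds}+p)^{-1}v(s)=-\int_s^\infty e^{-p(s-\tau)}v(\tau)\,d\tau$ is exactly the content of the paper's Proposition \ref{inductstep} (transplanted to $s=\log\lambda$), your identification of $\ker L$ is Proposition \ref{simpprop}, and the paper then iterates the single-factor step inductively --- precisely the alternative packaging you mention at the end. Your explicit integral formula has the minor virtue of making visible the homogeneous term $c\lambda^{-p}$ that must be absorbed at each stage of that induction, but the underlying argument is the same.
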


We prove this lemma with a series of elementary propositions. Their details are left out, but can be found in \cite{thesis}. For example, Proposition \ref{simpprop} can be shown by an induction argument on $0\le m\le d_0+\cdots+ d_n.$

\begin{proposition}
	Let $h: (2,\infty)\to \C$ be smooth. Assume there are positive reals $p_0<p_1<\cdots< p_{n}$ and positive integers $d_0,\cdots, d_{n}$ such that
	\eqnn{\Bigg{(}\lambda\frac{d}{d\lambda}+p_n\Bigg{)}^{d_n}\cdots \Bigg{(}\lambda\frac{d}{d\lambda}+p_0\Bigg{)}^{d_0} h(\lambda)=0.}
	Then, there are $a_{j,k}\in\C$ such that
	\eqnn{h(\lambda)=\sum_{j=0}^n\sum_{k=0}^{d_j-1} a_{j,k}\lambda^{-p_j}\log^{d_j-1-k}(\lambda).}
	\label{simpprop}
\end{proposition}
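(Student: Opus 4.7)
The plan is to reduce the given equation to a homogeneous linear ODE with constant coefficients via the classical substitution $s = \log \lambda$. Setting $\tilde h(s) = h(e^s)$, the chain rule gives $\lddl \leftrightarrow \tfrac{d}{ds}$, so the hypothesis on $h$ becomes
\[
\prod_{j=0}^n \Bigl(\tfrac{d}{ds} + p_j\Bigr)^{d_j} \tilde h(s) = 0.
\]
The operators $\lddl + p_j$ all commute (they are polynomials in the single operator $\lddl$ with scalar coefficients), so the product makes sense without specifying an order of composition.

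Next I would invoke the standard classification of solutions to homogeneous linear constant-coefficient ODEs. The characteristic polynomial $\prod_{j=0}^n (r + p_j)^{d_j}$ has roots $-p_0,\ldots,-p_n$ with multiplicities $d_0,\ldots,d_n$; since the $p_j$ are distinct, a basis for the solution space is $\{s^m e^{-p_j s} : 0 \le j \le n,\ 0 \le m \le d_j - 1\}$, so
\[
\tilde h(s) = \sum_{j=0}^n \sum_{m=0}^{d_j - 1} b_{j,m}\, s^m e^{-p_j s}
\]
for some constants $b_{j,m} \in \C$. Undoing the substitution, using $e^{-p_j s} = \lambda^{-p_j}$ and $s = \log \lambda$, and reindexing via $k = d_j - 1 - m$, produces exactly
\[
h(\lambda) = \sum_{j=0}^n \sum_{k=0}^{d_j - 1} a_{j,k}\, \lambda^{-p_j}\, \log^{d_j - 1 - k}(\lambda),
\]
as claimed.

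There is no genuine obstacle; the content is classical, and the substitution $s = \log \lambda$ makes the problem transparent. If one preferred a self-contained argument, the same conclusion follows by induction on $N := d_0 + \cdots + d_n$: the base case $N=1$ reduces to the separable ODE $(\lddl + p_0) h = 0$ with solution $c\lambda^{-p_0}$, and the inductive step rests on the identity
\[
(\lddl + p)\bigl(\lambda^{-p_j} \log^m \lambda\bigr) = (p - p_j)\, \lambda^{-p_j} \log^m \lambda + m\, \lambda^{-p_j} \log^{m-1} \lambda,
\]
which shows that each factor $\lddl + p_j$ lowers the log exponent when $p = p_j$ and is an isomorphism on the graded piece spanned by $\{\lambda^{-p_i} \log^m \lambda\}_m$ when $p \ne p_i$; a dimension count then matches the ansatz to the full kernel.
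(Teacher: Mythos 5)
Your proof is correct. The paper itself does not spell out a proof of this proposition: it defers the details to the author's thesis and only remarks that the statement follows by induction on $0\le m\le d_0+\cdots+d_n$. Your primary route --- the substitution $s=\log\lambda$, which turns each Euler operator $\lambda\frac{d}{d\lambda}+p_j$ into the constant-coefficient operator $\frac{d}{ds}+p_j$ and reduces the claim to the standard description of the kernel of $\prod_j(\frac{d}{ds}+p_j)^{d_j}$ on an interval --- is the cleanest way to see the result, and it is legitimately different in presentation from the paper's sketched induction. The inductive argument you append at the end, built on the identity $(\lambda\frac{d}{d\lambda}+p)(\lambda^{-p_j}\log^m\lambda)=(p-p_j)\lambda^{-p_j}\log^m\lambda+m\lambda^{-p_j}\log^{m-1}\lambda$, is essentially the paper's intended proof; it has the advantage of being the same mechanism used in the proofs of Proposition \ref{inductstep} and Lemma \ref{diflem}, where one must propagate an inhomogeneous bound rather than an exact kernel description, so it is the version that generalizes. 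Either argument suffices here; no gaps.
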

\begin{proposition}
	Let $f:(2,\infty)\to\C$ be smooth. Let $0<p< q$ and let $d\in\N.$ If $|(\lddl+p) f(\lambda)|\lesssim \lambda^{-q}\log^{d}(\lambda),$ then $|f(\lambda)|\lesssim \lambda^{-q}\log^{d}(\lambda).$ \label{inductstep}
\end{proposition}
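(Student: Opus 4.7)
The plan is to solve the first-order linear ODE $(\lddl + p)f = g$ using $\lambda^p$ as an integrating factor. Observe that $(\lddl+p)f(\lambda) = \lambda^{1-p}\frac{d}{d\lambda}\bigl(\lambda^p f(\lambda)\bigr)$, so setting $h(\lambda) := \lambda^p f(\lambda)$ converts the hypothesis into the pointwise bound
\[
|h'(\lambda)| = \lambda^{p-1}\bigl|(\lddl+p)f(\lambda)\bigr| \lesssim \lambda^{p-q-1}\log^d(\lambda).
\]
Because $p < q$, the exponent $p-q-1 < -1$, so $h'$ is integrable on $[\lambda,\infty)$ and the decay rate of its tail integral is exactly $\lambda^{p-q}\log^d(\lambda)$ (one can see this either by a substitution $s = \lambda u$, or by writing $\int_\lambda^\infty s^{p-q-1}\log^d(s)\,ds$ and integrating by parts in $s$ a finite number of times, all boundary terms being of the same order).

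The second step is to recover $h$ from $h'$. Since $h'$ is integrable near infinity, the limit $h_\infty := \lim_{\lambda\to\infty} h(\lambda)$ exists, and one has the representation
\[
h(\lambda) = h_\infty - \int_\lambda^\infty h'(s)\,ds.
\]
Applied in the context of Lemma \ref{diflem}, the function $f$ will be the remainder after peeling off, by induction, all the expansion terms of order strictly larger than $\lambda^{-p}$, and in particular $f(\lambda) = o(\lambda^{-p})$, so that $h_\infty = 0$. Combining this with the tail bound from the first step yields
\[
|h(\lambda)| \le \int_\lambda^\infty |h'(s)|\,ds \lesssim \lambda^{p-q}\log^d(\lambda),
\]
and dividing by $\lambda^p$ gives exactly $|f(\lambda)| \lesssim \lambda^{-q}\log^d(\lambda)$, as required.

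The only genuine obstacle is justifying $h_\infty = 0$: a constant multiple of $\lambda^{-p}$ satisfies the hypothesis trivially but not the conclusion, so the statement is really meant to be used when $f$ is known a priori to decay faster than $\lambda^{-p}$ at infinity. In the intended application inside Lemma \ref{diflem}, this is harmless because the operator $(\lddl + p_n)^{d_n}\cdots(\lddl + p_0)^{d_0}$ is stripped one factor at a time from the outside in; at the step where $\lddl+p$ is removed, the function being analyzed is the difference between $I(\lambda)$ (or an antecedent) and the asymptotic terms already identified at orders $>p$, and standard Riemann–Lebesgue/van der Corput style decay for $I(\lambda)$ ensures the relevant remainder is $o(\lambda^{-p})$. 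With this decay in hand, the integrating-factor argument above closes the induction and Proposition \ref{inductstep} holds.
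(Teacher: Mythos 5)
Your integrating-factor argument is the standard proof of this proposition; the paper omits the details (deferring them to the author's thesis), and the identity $(\lddl+p)f=\lambda^{1-p}\tfrac{d}{d\lambda}\big(\lambda^{p}f\big)$ together with the tail bound $\int_\lambda^\infty s^{p-q-1}\log^d(s)\,ds\lesssim \lambda^{p-q}\log^d(\lambda)$ for $p<q$ is exactly the intended content, so your computation is correct and matches the expected route. Your remark about $h_\infty$ is the one substantive point, and you have diagnosed it accurately: as literally stated the proposition is false, since $f(\lambda)=c\lambda^{-p}$ makes the hypothesis hold trivially (the left-hand side vanishes identically) while violating the conclusion. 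What your argument genuinely proves is $|f(\lambda)-a\lambda^{-p}|\lesssim\lambda^{-q}\log^{d}(\lambda)$ with $a=\lim_{\lambda\to\infty}\lambda^{p}f(\lambda)$, and this corrected form is what the induction behind Lemma \ref{diflem} actually consumes: the constants $a$ produced at each peeling step are, together with the homogeneous solutions of Proposition \ref{simpprop}, exactly the coefficients $a_{j,k}$ of the final expansion. For that reason your proposed escape route --- arguing that in the application the relevant remainder is already $o(\lambda^{-p})$ so that $h_\infty=0$ --- is not the right framing: one cannot know the remainder is $o(\lambda^{-p})$ before the coefficient of $\lambda^{-p}$ has been identified, and identifying that coefficient is precisely what the limit $h_\infty$ accomplishes (for the leading terms these limits are in general nonzero). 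The clean fix is to carry the constant $a\lambda^{-p}$ in the conclusion rather than to argue it vanishes; with that emendation your proof is complete.
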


\noindent Proposition \ref{inductstep} provides the base case for the proof of Lemma \ref{diflem}:
\begin{proof}
	Let $D_n$ be the differential operator $(\lddl+p_n)^{d_n}\cdots (\lddl+p_0)^{d_0}.$ Let $h$ be the general solution to the homogeneous equation $D_n(h)=0$ guaranteed by Proposition \ref{simpprop}. Then to solve for $f$ in the differential inequality (\ref{difine}), we need to solve $|D_n(f+h)|\lesssim \lambda^{-p_{n+1}}\log^{d_{n+1}-1}(\lambda).$ We use induction the same way as in the proof of Proposition \ref{simpprop}, using the order $p_0<\cdots <p_n<p_{n+1},$ to conclude
	\eqnn{|f(\lambda)+h(\lambda)|\lesssim \lambda^{-p_{n+1}}\log^{d_{n+1}-1}(\lambda).}
	Hence, there are constants $a_{j,k}\in\C$ such that
	\eqnn{f(\lambda)= \sum_{j=0}^n\sum_{k=0}^{d_j-1} a_{j,k}\lambda^{-p_j}\log^{d_j-1-k}(\lambda) +O\Big(\lambda^{-p_{n+1}}\log^{d_{n+1}-1}(\lambda)\Big).}
\end{proof}
\noindent The conclusion is that for all $n\in\N,$ there are $a_{j,k}\in\C$ such that 
\eqnn{\Bigg{|}I(\lambda)-\sum_{j=0}^n\sum_{k=0}^{d_j-1} a_{j,k}\lambda^{-p_j}\log^{d_j-1-k}(\lambda)\Bigg{|}\lesssim \lambda^{-p_{n+1}}\log^{d_{n+1}-1}(\lambda).}
Finally, taking $p_j$ and $d_j$ as in Theorem \ref{thm2}, the proof is complete, because this inequality implies that any finite sum has error no worse than the next term. Moreover, by standard results involving differentiation under the integral, $I(\lambda)$ is smooth in $\lambda$ no matter the phase considered in this paper.

\subsection{A more general estimate for remainders}
		In the proof of Theorem \ref{thm2}, the integrals
		\eqnn{\int e^{i\lambda\phi(x)} x^\sigma\partial^\sigma R_m(x) x^\beta\psi(x) dx,}
		require a delicate estimation. First, the integral
		\eqnn{\int_{[\varep, 4\varep]} e^{i\lambda\phi(x)} \partial^\sigma R_m(x)x^\beta\eta(x/\varep)dx = \varep^{\beta+\bone}\int_{[1,4]^d} e^{i\lambda\phi(\varep x)} \partial^{\sigma}R_{m}(\varep x)x^\beta\eta(x)dx}
		is considered, for some $C^k$ compactly supported $\eta:[1,4]^d\to \R.$ Write $\partial^\sigma R_k(\varep x)= R_{k,\sigma,\varep}$ and $x^\beta\eta(x)=\eta_\beta(x).$ By \ref{dtest},
		\eqnn{(D^t)^N ( R_{k,\sigma,\varep}\eta_\beta)\lesssim \lambda^{-N} \max_{1\le |\beta^0|\le N}|\partial^{\beta^0} ( R_{k,\sigma,\varep}\eta_\beta)|\varep^{-N\alpha}}
		for all $1\le N\le k-|\sigma|-1$. By the Leibniz formula,
		\alnn{& \partial^{\beta} ( R_{k,\sigma,\varep} \eta_\beta) = \sum_{\alpha_i\le \beta_i} \partial^{\alpha} R_{k,\sigma,\varep}\partial^{\beta-\alpha}\eta_\beta =\sum_{\alpha_i\le \beta_i} \varep^{\alpha}\partial^{\alpha+\sigma} R_{k}(\varep x)\partial^{\beta-\alpha}\eta_\beta\\
			& = \varep^{-\sigma} \sum_{\alpha_i\le \beta_i} \varep^{\alpha+\sigma}\partial^{\alpha+\sigma} R_{k}(\varep x)\partial^{\beta-\alpha}\eta_\beta.\\
			& = \varep^{-\sigma}\sum_{|\gamma|=k} \sum_{\alpha_i\le \beta_i} h_{\alpha,\sigma}(\varep x)\varep^{\gamma}x^\gamma \partial^{\beta-\alpha}\eta_\beta.}
		Since $x\in [1,4]^d,$ for some uniform constant independent of $\varep$ we can bound  
		\eqnn{|\varep^{\alpha+\sigma} \partial^{\alpha+\sigma}R_k(\varep x)|\lesssim\sum_{|\gamma|=k} \varep^{\gamma-\sigma}.}
		Finally,
		\eqn{\Bigg{|}\int_{[\varep, 4\varep]^d} e^{i\lambda\phi(x)}x^\sigma\partial^\sigma R_k(x)x^\beta \eta_\varep(x) dx\Bigg{|}\lesssim \sum_{|\gamma|=k} \lambda^{-N}\varep^{\gamma-N\alpha+\beta+\bone}\label{remestdamn}}
		for all $\alpha\in \cn(\phi)$, all $\varep$ small enough, and all $1\le N\le k-|\sigma|-1$. 

	Now let $\phi\in C^m([-1,1]^d)$ be $k-$nondegenerate and write $\phi=P_{k}+R_{k}$. With the same methods used above, the integral
	\eqnn{I_{R,\sigma}(\lambda)=\int_{[0,4]^d} e^{i\lambda\phi(x)}x^\sigma \partial^\sigma R_k(x) x^\beta\psi(x)dx}
	can be estimated, where $\psi$ is $C^{m-|\sigma|}$ and supported close enough to the origin. To this end, optimization can be applied exactly as above over each $\gamma$, assuming that $m-|\sigma|-1>\lf \gamma+\beta+\bone\rf$ for all $|\gamma|=k$ (so that we can choose $N>\lf \gamma+\beta+\bone\rf$). Since $\lf \gamma+\beta+\bone\rf\ge 1 + \lf\beta+\bone\rf,$ as each $\gamma$ lies inside $\cn(\phi),$ the estimate is
	\alnn{& |I_{R,\sigma}(\lambda)|\lesssim \sum_{|\gamma|=k}\lambda^{-\lf \gamma+\beta+\bone\rf}\log^{d_{\gamma+\beta}-1}(\lambda)\lesssim \lambda^{-1-\lf\beta+\bone\rf}\log^{d'-1}(\lambda),}
	where $d'$ is the greatest codimension over all faces intersecting each line $\{s(\gamma+\beta+\bone):s>0, |\gamma|=k\}.$ In particular, if only convenient phases are considered, $d'=1$ if there is a single compact facet, and in general $d'$ is the greatest codimension over any face of $\cn(\phi)$ not contained in any coordinate hyperplane. This is stated here:
	\begin{lemma}
		Let $\sigma\in\N^d.$ Assume $\phi\in C^m$ is $k-$nondegenerate. Let $\psi$ be $C^{m-|\sigma|}$ with support close enough to the origin and assume $m-|\sigma|-1>\lf\gamma+\beta+\bone\rf$ for all $|\gamma|=k$. Then
		\eqnn{
			\begin{split}
				&\Bigg{|}\int_{\R^d}e^{i\lambda\phi}x^\sigma\partial^{\sigma} R_k x^\beta\psi \Bigg{|}\lesssim \sum_{|\gamma|=k}\lambda^{-\lf \gamma+\beta+\bone\rf}\log^{d'-1}(\lambda)\lesssim \lambda^{-n-\lf\beta+\bone\rf}\log^{d'-1}(\lambda),
			\end{split}}
			where $d'$ is the maximum codimension over any face of $\cn(\phi)$ not contained in coordinate hyperplanes.
	\label{corcor}
\end{lemma}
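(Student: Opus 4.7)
My plan is to mirror the proof of Theorem~\ref{wutcor}: decompose the integral dyadically around the origin, bound each dyadic piece via the integration-by-parts operator $D$ from Section~\ref{varchestimate}, and then sum over the pieces using the linear programming optimization from Section~\ref{LP}. The new wrinkle is that the amplitude now contains $\partial^\sigma R_k$ instead of a smooth cutoff, so the per-box estimate must exploit the factor $x^\gamma$ built into every monomial of $R_k$.

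For the per-box estimate, fix a $C^k$ bump $\eta$ supported in $[1,4]^d$ and rescale the integral over $[\varep,4\varep]$ by $x\mapsto \varep x$. Applying $(D^t)^N$ and the bound \eqref{dtest} trades $N$ powers of $\lambda^{-1}$ for $\varep^{-N\alpha}$ for any $\alpha\in\cn(\phi)$, provided $N\le k-|\sigma|-1$ so that the amplitude has enough derivatives. Expanding $\partial^\sigma R_k(\varep x)$ by Leibniz and using the identity $\partial^\sigma R_k(\varep x)=\varep^{-\sigma}\sum_{|\gamma|=k}h_{\gamma,\sigma}(\varep x)\varep^\gamma x^\gamma$ (with the convention $h_\gamma\equiv 0$ when $\gamma\notin\cn(\phi)$, as discussed in Section~\ref{POL}) yields exactly the per-box bound \eqref{remestdamn}, namely
\[
\left|\int_{[\varep,4\varep]}e^{i\lambda\phi(x)}x^\sigma\partial^\sigma R_k(x)\,x^\beta\eta(x/\varep)\,dx\right|\lesssim \sum_{|\gamma|=k}\lambda^{-N}\varep^{\gamma-N\alpha+\beta+\bone}.
\]
The crucial point is that every $\gamma$ appearing on the right belongs to $\cn(\phi)$, which makes the optimization applicable.

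Next, after restricting to the orthant $\R_\ge^d$ (the other orthants are symmetric) and decomposing $\psi$ via the dyadic partition of unity subordinate to $\{(2^{-j},2^{-j+\btwo})\}_{j\in\N^d}$ used in Section~\ref{LP}, the problem reduces, for each $|\gamma|=k$, to summing
\[
\sum_{j\in\N^d}\min_{N\in\N,\ \alpha\in\cn(\phi)}\bigl\{\lambda^{-N}2^{(N\alpha-(\gamma+\beta)-\bone)\cdot j}\bigr\}.
\]
This is exactly the sum estimated in Section~\ref{LP} with $\gamma+\beta$ replacing $\beta$, and it produces $\lambda^{-\lf\gamma+\beta+\bone\rf}\log^{d_{\gamma+\beta}-1}(\lambda)$, where $d_{\gamma+\beta}$ is the greatest codimension over faces containing the ray through $\gamma+\beta+\bone$. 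The hypothesis $m-|\sigma|-1>\lf\gamma+\beta+\bone\rf$ is precisely what permits choosing $N>\lf\gamma+\beta+\bone\rf$ inside the allowed range $N\le k-|\sigma|-1$ for the optimization in Section~\ref{LP}.

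Finally, summing over the finitely many $|\gamma|=k$ with $h_\gamma\not\equiv 0$ gives the first stated bound. For the second inequality, I invoke Proposition~\ref{asymprop:p:2}: since every such $\gamma$ lies in $\cn(\phi)$ and thus satisfies $\lf\gamma\rf\ge 1$, we obtain $\lf\gamma+\beta+\bone\rf\ge 1+\lf\beta+\bone\rf$, and the log exponent $d_{\gamma+\beta}$ is dominated by $d'$. The main obstacle I expect is the bookkeeping to ensure uniformity of the implicit constants as $\varep\to 0$ and across the finitely many relevant $\gamma$; this is controlled by the uniform estimate $|\varep^{\alpha+\sigma}\partial^{\alpha+\sigma}R_k(\varep x)|\lesssim \sum_{|\gamma|=k}\varep^{\gamma-\sigma}$ for $x\in[1,4]^d$, but one has to be careful that the convention $h_\gamma\equiv 0$ off $\cn(\phi)$ is genuinely compatible with a $C^m$ Taylor remainder, which is exactly where the $k$-nondegeneracy and convenience of $P_k$ are used.
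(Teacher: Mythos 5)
Your proposal is correct and follows essentially the same route as the paper: the same rescaled per-box estimate via $(D^t)^N$ and \eqref{dtest}, the same Leibniz expansion of $\partial^\sigma R_k(\varepsilon x)$ leading to \eqref{remestdamn}, the same optimization from Section~\ref{LP} applied with $\gamma+\beta$ in place of $\beta$, and the same appeal to Proposition~\ref{asymprop:p:2} for the final inequality. The caveat you raise about the remainder monomials $\gamma$ lying in $\cn(\phi)$ is exactly the point the paper also relies on, so there is nothing further to add.
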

	 Note that if $\phi$ is smooth, there is no need to worry about $m$.
	
	\section{Derivatives of $I(\lambda)$ and the proof of Theorem \ref{thm2}\label{AsympChapter}}
	\subsection{One derivative: the base case for an induction argument}
	In this section $\ell$ is chosen so that $k<\ell=k(d_0+\cdots+d_n+p_{n+1})<m$ and $\phi$ is expressed as $P_{\ell}+R_{\ell}$ where $P_{\ell}$ is a degree $\ell$ polynomial and $R_{\ell} = \sum_{|\alpha|=\ell}x^\alpha h_\alpha(x).$ Denote the integral $\int_{\R^d}e^{i\lambda\phi(x)}x^\beta\psi(x)dx$ by $I_\beta(\lambda).$
		
	To begin the proof, $P_{\ell}$ is first rewritten in a suggestive way and then  $I_\beta(\lambda)$ is differentiated. First,
	\eqnn{P_{\ell}(x)=\sum_{|\alpha|\le \ell} c_\alpha x^\alpha = \sum_{|\alpha|\le \ell}\sum_{j=1}^d \alpha_j v_j c_\alpha x^\alpha,}
	where we are free to choose any $v\in\R_\ge^d$ satisfying $\alpha \cdot v =1$; the dependence on $\alpha$ is suppressed. Let $w\in \mathbf{n}(\beta+\bone)$ correspond to the facet $F$. Write $P_{\ell}$ as 
	\eqn{\sum_{\alpha}\sum_{j=1}^d \alpha_j (v_j-w_j) c_\alpha x^\alpha+\sum_{\alpha}\sum_{j=1}^d \alpha_j w_j c_\alpha x^\alpha.\label{rightleft1}}
	Note that analytic functions can also be expressed this way. Choosing $v= w$ for all $\alpha\in F,$ the quantity (\ref{rightleft1}) simplifies to 
	\eqn{\sum_{\alpha\notin F}\sum_{j=1}^d \alpha_j (v_j-w_j) c_\alpha x^\alpha+\sum_{\alpha}\sum_{j=1}^d \alpha_j w_j c_\alpha x^\alpha\label{rightleft}.}
	Since the set we are integrating over is compact, and $e^{i\lambda\phi}$ has the same smoothness as $\phi$, the integral $\lddl I_\beta(\lambda)$ equals
	\[\int e^{i\lambda\phi(x)} i\lambda\phi(x)x^\beta\psi(x)dx = 
	\underbrace{\int e^{i\lambda\phi} i\lambda(wx\cdot\nabla\phi)x^\beta\psi}_{I_1} +\underbrace{\int e^{i\lambda\phi} i\lambda(\phi-wx\cdot\nabla\phi)x^\beta\psi}_{I_2}.
	\]
	We first estimate $I_1.$ Integration by parts guarantees
	\eqnn{I_1=- \int e^{i\lambda\phi(x)} \nabla\cdot(x^\beta \psi(x) wx) dx.}
	By the product rule, and because $w\in\bn(\beta+\bone),$ the integral above is just
	\eqnn{I_1= -\lf\beta+\bone\rf \int e^{i\lambda\phi} x^\beta \psi  -\underbrace{\int e^{i\lambda\phi} x^\beta (wx\cdot \nabla\psi)}_{I_{11}}.}
	Define $D_\beta$ to be the operator $\lddl+\lf\beta+\bone\rf$. It was just shown 
	\eqnn{D_\beta I_\beta(\lambda) = I_2-I_{11}.}
	Next, Theorem \ref{wutcor} guarantees
	\eqn{|I_{11}(\lambda)|\lesssim \sum_{j=1}^d w_j \lambda^{-\lf\beta+\bee_j+\bone\rf}\log^{d_j-1}(\lambda).\label{strictsmall}} 
	\begin{itemize}
		\item If $\lf \beta+\bee_j+\bone\rf>\lf\beta+\bone\rf,$ we are done with the estimate. 
		\item Otherwise, $\lf\beta+\bee_j+\bone\rf = \lf\beta+\bone\rf$ so by Proposition \ref{asymprop:p:3} we know $\mathbf{n}(\beta+\bee_j+\bone)\subseteq \mathbf{n}(\beta+\bone).$ 
		\begin{itemize}
			\item[$\circ$] If equality holds, Proposition \ref{asymprop:p:3} guarantees $w_j=0$.
			\item[$\circ$] Otherwise (iv) of the same proposition guarantees the exponent of $\log$ makes the estimate strictly better.
		\end{itemize} 
	\end{itemize}
	To estimate $I_2$, write
	\eqnn{I_2= \underbrace{\int e^{i\lambda\phi} i\lambda(P_{\ell}-wx\cdot\nabla P_{\ell})x^\beta\psi}_{I_{21}} + \underbrace{\int e^{i\lambda\phi} i\lambda(R_{\ell}-wx\cdot\nabla R_{\ell})x^\beta\psi}_{I_{22}}.}
	By (\ref{rightleft}), $P_{\ell}(x)-wx\cdot\nabla P_{\ell}(x)= \sum_{\alpha\notin F}\sum_{j=1}^d (v_j-w_j)c_\alpha x^\alpha.$ If this quantity is zero, we are done with the estimate. Otherwise, Theorem \ref{wutcor} tells us we can bound $I_{21}$ above by
	\eqnn{|I_{21}|\lesssim\max_{\alpha\in \supp(P_{\ell})-F} \lambda \cdot \lambda^{-\lf\alpha+\beta+\bone\rf}\log^{d'-1}(\lambda)}
	for $d'$ guaranteed by Theorem \ref{wutcor}. First, $\alpha\in\cn(\phi)$ implies $\lf\alpha\rf\ge 1$. If $1+\lf\beta+\bone\rf =\lf\alpha+\beta+\bone\rf,$ we again apply Proposition \ref{asymprop:p:3}. Since $\alpha\notin F,$  $\mathbf{n}(\alpha+\beta+\bone)\subsetneq \mathbf{n}(\alpha)\cap \mathbf{n}(\beta+\bone).$ In particular, $|\mathbf{n}(\alpha+\beta+\bone)|<\min\{d, \bn(\beta+\bone)\}.$ Theorem \ref{wutcor} then guarantees that $d'$ must be strictly smaller than the power of $\log$ in the first term of the expansion of $I_{\beta}(\lambda)$, and the estimate in this case is strictly better because of the power of the logarithm. If $\lf\alpha+\beta+\bone\rf>1+\lf\beta+\bone\rf$, the power of $\lambda$ must be strictly smaller than that of the first term in the expansion of $I_\beta(\lambda)$.
	
	We move on to estimating $I_{22}$. The analytic case requires a kind of division argument, and the convenient polyhedron case is done by applying Lemma \ref{corcor}. In the analytic case, we exploit cancellation similar to the treatment of $I_{21}.$ The major obstruction here for $C^m$ functions is that $R_{\ell}(x)-wx\cdot \nabla R_{\ell}(x)$ does not have nice cancellation like we saw for $P_{\ell}(x)-wx\cdot\nabla P_{\ell}(x)$ above if $\phi$ is not smooth, even in one dimension. There is nothing we can do with this term unless $\phi$ is sufficiently smooth, i.e., if $m$ is sufficiently larger than $k.$ 
	
	\subsubsection{The nondegenerate analytic case.}
	Theorem 1.2 in Ziegler\cite{ziegler95} states that any polyhedron is a Minkowski sum of a convex hull of a finite set of points and a conical combination of vectors. In this case, we know $\cn(\phi)$ is the convex hull of its extreme points plus the cone $\R_\ge^d.$ Because of this fact, assume $m$ is so large that $\alpha\in \cn(\phi)$ with $|\alpha|\ge m$ implies $\alpha=\alpha'+\bee_j$ for some $\alpha'\in \cn(\phi)$ and some $1\le j\le d.$ Then we can show, using the same cancellation above for $P_m(x)-wx\cdot\nabla P_m(x)$, that $R_m(x)-wx\cdot\nabla R_m(x)$ can be written as
		\eqn{\sum_{\substack{|\alpha| \ge m\\ \alpha\notin H_w}} c_\alpha' x^\alpha = \sum_{\substack{|\alpha| = m\\ \alpha\notin H_w}} x^\alpha h_\alpha(x)\label{remainder}}
	for some analytic $h_\alpha(x).$ This is by induction: clearly any $|\alpha|=m$ can be expressed as $\alpha+\bz$ where $\alpha\notin H_w.$ For the induction step, we simply use that any $\beta$ can be expressed as $\beta'+\bee_j$, where $\beta'=\alpha+\gamma$ and $|\alpha|=m, \alpha\notin H_w.$ Therefore all $|\alpha|\ge m$ fall into equivalence classes depending on which monomial $x^\alpha$ of order $m$ divides them, with $\alpha\notin H_w.$ With this we can complete the estimate for analytic phases with the exact same method as $I_{21}$: after taking the remainders to be as discussed above in this paragraph, by Lemma \ref{corcor} with $h_\alpha\psi$ as the smooth amplitude,
	\eqnn{|I_{22}| \lesssim \lambda^{1- \lf \alpha+\beta+\bone\rf}\log^{d'-1}(\lambda).}
	To make sure this bound is better than what we started with, apply the exact same argument as for $I_{21}$ for the finitely many $|\alpha|=\ell$ not contained in $F=H_w\cap \cn(\phi).$

	\subsubsection{The $k-$nondegenerate case.}	
	To finish this case, we use that $\ell=k(d_0+\cdots+d_n+p_n).$ For any $|\gamma|=\ell,$ let $|\alpha|=k$ be such that $(d_0+\cdots+d_n+p_n)\alpha=\gamma.$ Using this, together with convenience of $\phi,$ we bound $\lf\gamma\rf$ above and below. By Proposition \ref{asymprop:p:1}, $\lf \alpha\rf \ge 1$ implies $\lf \gamma\rf \ge d_0+\cdots+d_n+p_n$. Since $\cn(\phi)$ intersects each coordinate axis, all normals $w$ corresponding to facets of $\cn(\phi)$ have positive components. On the other hand, all normals $w$ corresponding to facets satisfy $\bee_j\cdot w\le 1$ because all such supporting hyperplanes lie above the plane $|\xi|=1.$ Therefore for any $w\in\bn(\gamma+\bone)$ we have $\lf\gamma+\bone\rf = (\gamma+\bone)\cdot w \le \ell+d.$ The inequality is because $|\gamma+\bone|=\ell+d.$ We summarize this for future use:
	\eqn{d_0+\cdots+d_n+p_n\le \lf\gamma+\bone\rf \le \ell+d.\label{rembounds}}
	Lemma \ref{corcor} guarantees that if $m>d_0+\cdots +d_n+\lf\alpha+\bone\rf$ then
	\eqnn{|I_{22}| \lesssim\max_{|v|=\ell} \lambda^{1- \lf v+\beta+\bone\rf}\log^{d'-1}(\lambda).}
	Therefore we can apply the estimate for 
	\eqnn{m>d_0+\cdots +d_n+\ell+d=(k+1)(d_0+\cdots+d_n)+kp_n+d\hspace{.25cm}\footnote{This is just a technical condition to significantly reduce the potential complexity of computations and bookkeeping. Lemma \ref{corcor} can be iterated in order to examine what happens for products of remainders. By keeping track of where each derivative lands, one could be less restrictive on the lower bound on $m$ using these same arguments.}}
	and the estimate is $\lesssim \lambda^{1-\lf \gamma+\beta+\bone\rf}\log^{d'-1}(\lambda)\lesssim \lambda^{-\lf\beta+\bone\rf-p_n}\log^{d'-1}(\lambda).$
	
	Let $p_0<p_1<\cdots$ be the well ordering stated at the beginning of the section. Let $j\in\N$ be such that $p_j=\lf\beta+\bone\rf.$ To summarize, the above shows
	\begin{eqnarray}
	\displaystyle|D_\beta I_\beta(\lambda)|\lesssim
	\begin{cases}
	\displaystyle \lambda^{-p_{j+1}}\log^{d_{j+1}-1}(\lambda)& d_j=1\\
	\displaystyle \lambda^{- p_j}\log^{d_j-2}(\lambda)  & \text{otherwise.}\\
	\end{cases}\label{asympindbase}
	\end{eqnarray}
	
	\subsection{Estimating higher derivatives of $I(\lambda)$}
	Since the induction is the same for all $d_j, r,$ except for a few extra difficulties when $r=d_n,$ we consider only the following case. For $0\le r<d_n$, denote by $G_{n,r}(\lambda)$ the integral
		\eqnn{\Bigg{(}\lddl+p_n\Bigg{)}^r\Bigg{(}\lddl+p_{n-1}\Bigg{)}^{d_{n-1}}\dotsm \Bigg{(}\lddl+p_0\Bigg{)}^{d_0}I(\lambda).}
	The goal is to show
		\begin{eqnarray}
		\displaystyle|G_{n,r}(\lambda)|\lesssim
		\begin{cases}
		\displaystyle \lambda^{-p_{n+1}}\log^{d_{n+1}-1}(\lambda)& d_n=1\\
		\displaystyle \lambda^{- p_n}\log^{d_n-2}(\lambda)  & \text{otherwise. }\\
		\end{cases}\label{difqq}
		\end{eqnarray}
	The induction hypothesis is that $G_{n,r}$ can be expressed as a sum 
		\eqnn{G_{n,r}(\lambda)=\sum_{j=0}^{d_0+\cdots+d_{n-1}+r} \lambda^j J_{j,n,r}(\lambda),}
	where in the $k-$nondegenerate case, $J_{j,n,r}$ can be split up into finitely many integrals, each of which are either of the form
	\eqnn{I_1=\int e^{i\lambda\phi(x)} x^\alpha \psi(x)dx,\text{ or the form }I_2=\int e^{i\lambda\phi(x)} x^\sigma\partial^{\sigma}R_{\ell}(x)\psi(x)dx,}
	where
	\begin{itemize}
		\item each $\sigma$ satisfies $|\sigma|\le d_0+\cdots+d_{n-1}+r$;
		\item each $\alpha$ satisfies $p_n+j\le \lf\alpha+\bone\rf,$ with equality only if the line $s(\alpha+\bone)$ does not intersect $\cn(\phi)$ in a codimension $d_n-r+1$ face; 
		\item each $\psi$ are compactly supported and at least $m-d_0-\cdots-d_{n-1}-r$ times continuously differentiable.
	\end{itemize}
	In the real analytic case, the induction hypothesis is reduced to only integrals of the form $I_1$, under only the second and third hypotheses above.
	
	The base case $(n=0,r=1)$ was completed above (choosing $\beta=\bz$). Applying $(\lddl+p_n)$ to $\lambda^j J_{j,n,r}$ consider $(\lddl+p_n)\lambda^jI_1:$ after a similar computation to what was already done, this expression equals
	\aln{(p_n+j-(\alpha+\bone)\cdot w)\lambda^j&\int e^{i\lambda\phi} x^\alpha\psi\label{I11}\\
		- \hspace{.2cm}\lambda^j&\int e^{i\lambda\phi}x^\alpha (wx\cdot\nabla\psi)\label{I12}\\
		 + \hspace{.2cm}i\lambda^{j+1}&\int e^{i\lambda\phi} x^\alpha(P_\ell-wx\cdot \nabla P_\ell)\psi\label{I13}\\
		 + \hspace{.2cm}i\lambda^{j+1}&\int e^{i\lambda\phi} x^\alpha(R_\ell-wx\cdot \nabla R_\ell)\psi\label{I14}.}
	We can make a choice of any vector $w$ above, so choose $w\in\bn(\alpha+\bone).$ 
	
	If $\lf\alpha+\bone\rf >p_n+j,$ the estimate is strictly better by Lemma \ref{corcor} and we are done. Otherwise, $\lf\alpha+\bone\rf=p_n+j$ and the coefficient in (\ref{I11}) equals zero. Integral $(\ref{I11})$ satisfies the induction hypothesis for the next derivative. 
	
	Lemma \ref{corcor} bounds (\ref{I12}) above by $\lambda^j\sum_{i=1}^d w_i \lambda^{-\lf\alpha+\bee_{i}+\bone\rf}\log^{d_\alpha-1}(\lambda)$. Hence, for $k-$nondegenerate phases, we are done: $\lf\alpha+\bee_i+\bone\rf> \lf\alpha+\bone\rf\ge p_n+j.$ In the analytic case, if $w\in \bn(\alpha+\bee_i+\bone)$ and $w'\in \bn(\alpha+\bone),$ then
	\eqnn{(\alpha+\bee_i+\bone)\cdot w=(\alpha+\bone)\cdot w'\le (\alpha+\bone)\cdot w}
	so that $w_i=0$ for all $w\in\bn(\alpha+\bee_i+\bone).$ Therefore the coefficient of $\lambda^{-\lf\alpha+\bee_{i}+\bone\rf}\log^{d_\alpha-1}(\lambda)$ in this estimate is zero. Therefore the induction hypothesis is satisfied here.
	
	For (\ref{I13}), the argument is identical as the one given for integral $I_{21}$ in the induction step. It is also easy to see this integral satisfies the induction step.
	
	Integral (\ref{I14}) satisfies the induction hypothesis after we break up the difference of the remainders into two integrals for the $C^k$ case. The estimate is an identical argument to that for $I_{22}$ above in both cases, which must be considered separately in order to use the previous methods. In this case Lemma \ref{corcor} provides the estimate 
	\eqnn{(\ref{I14})\lesssim \max_{|\gamma|=\ell}\lambda^{j+1-\lf\alpha+\gamma+\bone\rf}(\lambda)\log^{d-1}(\lambda).}
	But $d_0\ge 1$, $p_{n+1}>0$, and Proposition \ref{asymprop:p:2} imply
	\eqnn{\lf\alpha+\gamma+\bone\rf \ge \lf\alpha+\bone\rf +\lf\gamma\rf \ge p_n+j+(d_0+\cdots+d_n+p_{n+1})>p_n+j+1.}
	
	Now we consider integrals of the second form for $k-$nondegenerate phases. Computing $(\lddl+p_n)I_2$, we get summands
	
	\alnn{(p_n+j-(\sigma+\bone)\cdot w)\lambda^j&\int e^{i\lambda\phi} (x^\sigma\partial^\sigma R_\ell)\psi
		- \lambda^j\int e^{i\lambda\phi}(x^\sigma\partial^\sigma R_\ell) (wx\cdot\nabla\psi)\\
		- \hspace{.2cm}\sum_{a=1}^d w_a\lambda^j&\int e^{i\lambda\phi}(x^{\sigma+\bee_j}\partial^{\sigma+\bee_j} R_\ell) (wx\cdot\nabla\psi)\\
		+ \hspace{.2cm} i\lambda^{j+1}&\int e^{i\lambda\phi} (x^\sigma\partial^\sigma R_\ell)(P_\ell-wx\cdot \nabla P_\ell)\psi\\
		+ \hspace{.2cm}i\lambda^{j+1}&\int e^{i\lambda\phi} (x^\sigma\partial^\sigma R_\ell)(R_\ell-wx\cdot \nabla R_\ell)\psi.
		}
	By choice of degree of the remainder terms, the decay of all of these integrals is strictly better for the $k-$nondegenerate case. Moreover, we easily see all of these integrals either are equal to a power of $\lambda$ times an integral of the form $I_2,$ or can be expressed as a sum of such. This completes the argument.

	\section{Acknowledgments}
	I want to deeply thank the many people who read and discussed early drafts of this paper. Most importantly I would like to thank my thesis adviser, Professor Philip T. Gressman, for everything he has taught me. Without him surely this would have been impossible for me.
	
	

	\bibliographystyle{plain}
	\bibliography{Article1}
	
\end{document}